\documentclass{amsart}
\usepackage{amsmath}
\usepackage{amsthm}
\usepackage{amssymb}
\usepackage[utf8]{inputenc}
\usepackage[english]{babel}
\usepackage{hyperref}
\usepackage[autostyle]{csquotes}
\usepackage[shortlabels]{enumitem}
\usepackage{xspace}
\usepackage{mathtools}
\usepackage{xcolor}

\theoremstyle{definition}
\newtheorem{defn}{Definition}[section]
\newtheorem{example}[defn]{Example}
\newtheorem{fact}[defn]{Fact}

\newtheorem{remark}[defn]{Remark}
\theoremstyle{plain}
\newtheorem{lemma}[defn]{Lemma}
\newtheorem{proposition}[defn]{Proposition}
\newtheorem{theorem}[defn]{Theorem}
\newtheorem{corollary}[defn]{Corollary}
\newcommand{\G}{\mathbb{G}}
\newcommand{\GG}{\mathfrak{G}}

\DeclareMathOperator{\diam}{diam}
\DeclareMathOperator{\Gen}{Gen}

\begin{document}
\title[On the universal valued Abelian groups of Niemiec]{The universal valued Abelian groups of Niemiec are L\'{e}vy, strongly exotic, extremely amenable, and $\G_r(0)$ is generically monothetic}
\author{Alessandro Codenotti}
\address{Dipartimento di Matematica, Universit\`{a} di Bologna, Piazza di
Porta S. Donato, 5, 40126 Bologna,\ Italy}
\email{alessandro.codenotti@unibo.it}
\urladdr{https://sites.google.com/view/alessandro-codenotti/}
\author{Martino Lupini}
\address{Dipartimento di Matematica, Universit\`{a} di Bologna, Piazza di
Porta S. Donato, 5, 40126 Bologna,\ Italy}
\email{martino.lupini@unibo.it}
\urladdr{http://www.lupini.org/}
\thanks{The authors were partially supported by the Starting Grant 101077154
\textquotedblleft Definable Algebraic Topology\textquotedblright\ from the
European Research Council, the Universit\'{e} Paris Cit\'{e}, the Gruppo
Nazionale per le Strutture Algebriche, Geometriche e le loro Applicazioni
(GNSAGA) of the Istituto Nazionale di Alta Matematica (INDAM), and the
University of Bologna. }
\subjclass[2020]{Primary 22A05, 03E15; Secondary 20K99, 43A07, 54H11}
\keywords{Extreme amenability; valued Abelian group; invariant metric;
generic property; Fra\"{\i}ss\'{e} limit; Polish group; universal Abelian
group}
\date{\today }

\begin{abstract}
We prove various properties of the universal valued Abelian groups $\mathbb{G}_r(N)$ constructed by Niemiec, for $r\in\{1,\infty\}$ and $N\in\{0,2,3,\ldots\}$. First we show that the completion of $\Gamma_0=\bigoplus_{n\in\mathbb{N}}\mathbb{Q}/\mathbb{Z}$ (for $N=0$) or $\Gamma_N=\bigoplus_{n\in\mathbb{N}}\mathbb{Z}/N\mathbb{Z}$ (for $N\geq 2$) with respect to a generic invariant metric, bounded by $1$ when $r=1$, is isometrically group-isomorphic to $\mathbb{G}_r(N)$, recovering a result of Doucha in the case when $r=\infty $ and $N=0$. This confirms an expectation of Doucha for the group $\Gamma_N$. We combine this genericity result with a criterion of Melleray and Tsankov concerning the extreme amenability of the generic completion of a countable group to obtain the extreme amenability of $\G_r(N)$. We then establish the strictly stronger properties that these groups are L\'{e}vy and strongly exotic. We conclude by showing that $\G_r(0)$ is also monothetic, from which we obtain the existence of a monothetic group structure on the Urysohn sphere, giving a bounded version of a result by Cameron and Vershik and answering a question of Niemiec.
\end{abstract}

\maketitle

\section{Introduction}
For $r\in \{1,\infty \}$ and $N\in
\{0,2,3,\ldots \}$, Niemiec \cite{Niemiec} constructed a valued group $\mathbb{G}_{r}(N)$
universal for separable valued Abelian groups of class $\mathcal{O}_{0}$
when $N=0$ and for separable valued Abelian groups of exponent dividing $N$ when $%
N\geq 2$; when $r=1$ its value is bounded by $1$ and it is universal for the corresponding classes of valued groups whose values are also bounded by $1$. The groups $\mathbb{G}_{r}(N)
$ are counterparts, in the category of valued Abelian groups, of the Urysohn
space \cite{urysohn1927universel} in the category of metric spaces and of
the Gurarij space \cite{gurarij1966spaces} in the category of Banach spaces;
in particular, the metric space underlying $\mathbb{G}_{r}(N)$ is isometric
to the Urysohn space of diameter $r$ if and only if $N\in \{0,2\}$ \cite[%
Theorems 5.1 and 5.5]{Niemiec}. Niemiec's construction builds on and
generalizes the construction of the group $\mathbb{G}_{\infty }(0)$ obtained
by Shkarin in \cite{shkarin1999universal}. The goal of this paper is to prove various properties of $\G_r(N)$, in particular we obtain the following results:
\begin{itemize}
    \item Let $\Gamma_0=\bigoplus_{n\in\mathbb N}\mathbb Q/\mathbb Z$ and $\Gamma_N=\bigoplus _{n\in\mathbb N}\mathbb Z/N\mathbb Z$ for $N\geq 2$. For all $r\in\{1,\infty\}$ and all $N\in\{0,2,3,\ldots\}$ there is a comeager set of values $\lambda$ on $\Gamma_N$, bounded by $1$ if $r=1$, such that the completion of $(\Gamma_N,\lambda)$ is isometrically group-isomorphic to $\G_r(N)$, see Theorem \ref{thm: generic metric completion is isomorphic to Niemiec's universal group}. This generalizes a result of Doucha, who established the case $N=0$, $r=\infty$ and proved that there exists a generic metric completion of $\Gamma_N$ for all $N$ and $r=\infty$ \cite[Theorem 0.2]{doucha2019}, without identifying it explicitly with $\G_r(N)$, although this identification is mentioned as expected \cite[Remark 3.16]{doucha2019}. To identify the generic metric completion of $\Gamma_N$ with $\G_r(N)$ we exploit a result of Niemiec giving a sufficient condition for a valued Abelian group to have $\mathbb{G}_r(N)$ as completion, in terms of an approximate extension property, analogous to the one used by Kubi\'{s} and Solecki \cite{kubis2013proof} in their proof of the uniqueness of the Gurarij space (see Proposition \ref{prop: characterizing Niemiec groups in terms of approximate extension property} below for a precise statement). 
    \item For every $r\in\{1,\infty\}$ and every $N\in\{0,2,3,\ldots\}$, the group $\G_r(N)$ is a \emph{L\'{e}vy group}. We verify this property directly in Theorem \ref{thm: Niemiec groups are Levy}, by building an explicit increasing  family of finite subgroups whose normalized Haar measures concentrate, using Niemiec's extension theorem \cite[Theorem 1.2(a)]{Niemiec} (restated as Theorem \ref{thm: extension property for G_r(N)} below) and an inequality by Talagrand on the concentration function of Hamming cubes (Example \ref{example: discrete spaces form a  Levy family} below). The argument is similar to the one used by Pestov to prove that $\mathrm{Iso}(\mathbb U)$, the isometry group of the Urysohn space, is L\'{e}vy by approximating it with finite subgroups \cite{pestov2005isometry}.
    \item For every $r\in\{1,\infty\}$ and every $N\in\{0,2,3,\ldots\}$ the group $\G_r(N)$ is \emph{strongly exotic}. We verify this property in Theorem \ref{theorem: G_r(N) is strongly exotic} by showing that every finite-order element of  $\G_r(N)$ is contained in a copy of the strongly exotic group $L^0(\phi,\mathbb Z/n\mathbb Z)$ \cite{schneider2025groups}.
    \item For every $r\in\{1,\infty\}$ and every $N\in\{0,2,3,\ldots\}$ the group $\G_r(N)$ is \emph{extremely amenable}. The extreme amenability of $\G_\infty(0)$ was already established by Doucha \cite[Corollary 3.17]{doucha2019} by using his result about the generic completion of $\Gamma_0$ mentioned in the first bullet point, combined with a criterion by Melleray and Tsankov concerning the extreme amenability of the generic completion of a countable group \cite[Theorem 6.4]{melleray2013generic}. Since Doucha's strategy can be carried out for all $r$ and $N$, once the generic metric completion of $\Gamma_N$ has been identified with $\G_r(N)$ and since being L\'{e}vy, as well as being strongly exotic and amenable, imply being extremely amenable, we obtain three distinct proofs of the extreme amenability of $\G_r(N)$.
    \item The groups $\G_1(0)$ and $\G_\infty(0)$ are \emph{generically monothetic}. This answers Question 1 of \cite{Niemiec} and shows that the Urysohn sphere $\mathbb U_1$ (which is isometric to $\G_1(0)$ by \cite[Theorem 5.1]{Niemiec}) carries a monothetic group structure, with respect to which the metric of $\mathbb U_1$ is invariant, a result established for the Urysohn space $\mathbb U$ by Cameron and Vershik \cite{cameron2006isometry}.
\end{itemize}
The paper is structured as follows. In Section \ref{section: preliminaries} we recall
the necessary background on valued groups, the properties of $\mathbb{G}_r(N)
$ which will be needed in the following arguments and the criterion of Melleray and Tsankov for establishing the extreme amenability of the generic completion of a countable group. In
Section \ref{section: extreme amenability} we prove that the generic
completion of $\Gamma_N$ is isomorphic to $\mathbb{G}_r(N)$ (Theorem \ref%
{thm: generic metric completion is isomorphic to Niemiec's universal group})
and conclude that $\mathbb{G}_r(N)$ is extremely amenable (Theorem \ref{thm:
extreme amenability of G_r(N)}). In Section \ref{section: G_r(N) is strongly exotic and Levy} we first prove that $\G_r(N)$ is a L\'{e}vy group (Theorem \ref{thm: Niemiec groups are Levy}) and recover the extreme amenability of $\G_r(N)$ (Corollary \ref{cor: extreme amenable through Levy}). We then prove that $\G_r(N)$ is strongly exotic (Theorem \ref{theorem: G_r(N) is strongly exotic}) and obtain as a corollary a third proof of the extreme amenability of $\G_r(N)$ (Corollary \ref{cor: extremely amenable through strongly exotic}). Finally, in Section \ref{section: G_r(0) is monothetic}, we show that $\G_r(0)$ is monothetic (Theorem \ref{thm: G_r(0) is generically monothetic}) and obtain the existence of a monothetic group structure on $\mathbb U_1$ in Corollary \ref{corollary: group structure on Urysohn sphere}.

\section{Preliminaries}

\label{section: preliminaries} In this section we recall the necessary
background on valued groups and the main properties of $\mathbb{G}_r(N)$
that will be needed in the following sections. We write
all groups in additive notation, in particular we denote the identity by $0$%
. A \emph{value} on a group $G$ is a function $p\colon G\to[0,\infty)$ such
that 
\begin{align*}
p(x)=0 &\iff x=0 \\
p(-x)&=p(x) \\
p(x+y)&\leq p(x)+p(y)
\end{align*}
for every $x,y\in G$. A value $p$ on a group induces a metric $%
d_p(x,y)=p(x-y)$, and we will often identify a value with the associated
metric. We denote by $\widehat{(G,p)}$ the completion of $G$ with respect to 
$p$.

A valued Abelian group $(G,p)$ is of \emph{class} $\mathcal{O}_0$ if 
\begin{equation*}
\lim_{n\to\infty}\frac{p(nx)}{n}=0
\end{equation*}
for every $x\in G$. Let $\mathfrak{G}$ denote the class of all separable
valued Abelian groups. Let $\mathfrak{G}_\infty(0)$ and $\mathfrak{G}_1(0)$
denote, respectively, the class of all separable valued Abelian groups $(G,p)
$ of class $\mathcal{O}_0$ and the subclass of those for which additionally $%
p\leq 1$. In particular, $\mathfrak{G}_1(0)\subseteq\mathfrak{G}_\infty(0)$.
Additionally, for $N>1$, let $\mathfrak{G}_\infty(N)$ denote the subclass of 
$\mathfrak{G}_\infty(0)$ consisting of groups of exponent dividing $N$, and let $%
\mathfrak{G}_1(N)=\mathfrak{G}_\infty(N)\cap\mathfrak{G}_1(0)$.

\begin{theorem}[{{\protect\cite[Theorem 1.1]{Niemiec}}}]
\label{thm: Niemiec characterization}  Let $r\in\{1,\infty\}$ and let $%
N\in\{0,2,3,4,\ldots\}$. There is a unique (up to isometric group
isomorphism) valued Abelian group, denoted by $\mathbb{G}_r(N)$, with the
following four properties: 

\begin{enumerate}
\item $\mathbb{G}_r(N)$ is complete and $\mathbb{G}_r(N)\in\mathfrak{G}_r(N)$%
. 

\item Every finite valued group in $\mathfrak{G}_r(N)$ admits an isometric
group embedding into $\mathbb{G}_r(N)$. 

\item Every isometric group homomorphism between two finite subgroups of $%
\mathbb{G}_r(N)$ is extendable to an isometric group homomorphism of $%
\mathbb{G}_r(N)$ onto itself. 

\item If $N=0$, elements of finite order form a dense subgroup of $\mathbb{G}%
_r(N)$. 
\end{enumerate}
\end{theorem}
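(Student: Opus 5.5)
The plan is to realize $\mathbb{G}_r(N)$ as a Fra\"{\i}ss\'{e}-type limit of finite valued groups, and to get uniqueness from a back-and-forth argument.

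\textbf{Existence.} Let $\mathcal{K}$ be the class of finite valued Abelian groups in $\mathfrak{G}_r(N)$, taking only rational values. For $N=0$ these are finite groups with values bounded by $1$ if $r=1$; class $\mathcal{O}_0$ is automatic for torsion groups.
\begin{enumerate}
\item First I verify that $\mathcal{K}$ has the amalgamation property. Given isometric embeddings $A\to B$ and $A\to C$, take the pushout $D=(B\oplus C)/A$. Define on it the largest value extending both, namely
\[
p(d)=\inf\{p_B(b)+p_C(c): d=[b+c]\},
\]
truncated by $1$ when $r=1$. Check that this restricts correctly to $B$ and $C$; this is the usual free-amalgam computation using invariance of the metrics. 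When $N\ge2$ the pushout still has exponent dividing $N$.
\item The joint embedding property follows by amalgamating over $\{0\}$. Countability up to isomorphism holds because the values are rational.
\item This gives a countable ultrahomogeneous limit $G_0$. I take $\mathbb{G}_r(N)$ to be its completion $\widehat{G_0}$.
\end{enumerate}

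Properties (1) and (4) are then clear, since $G_0$ is torsion and dense. Two points need care.
\begin{itemize}
\item For (2), arbitrary finite valued groups have real values. I approximate by rational-valued ones and use an approximate-embedding argument in the completion: build successively better embeddings, each moving generators by at most $2^{-k}$, and pass to a limit. The limit map is an isometric homomorphism on the finite group. This rests on an approximate extension property of the form \enquote{$\varepsilon$-isometric maps on finite subgroups can be corrected to exact ones nearby}, proved via amalgamation with a graph-metric gluing.
\item For (3), I do the same upgrade: an isometry between finite subgroups of $\widehat{G_0}$ is approximated by isomorphisms between finite subgroups of $G_0$. Back-and-forth with errors summing to zero then yields a surjective isometric automorphism.
\end{itemize}

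\textbf{Uniqueness.} Given two groups $G,H$ satisfying (1)--(4), I run back-and-forth. Enumerate dense sequences of finite-order elements; for $N\ge2$ every element has finite order. At each stage, extend an isometric isomorphism between finite subgroups $A\subseteq G$ and $B\subseteq H$ as follows:
\begin{enumerate}
\item Take the next element $g$ and the finite group $A'=\langle A,g\rangle$.
\item Embed $A'$ into $H$ by (2).
\item Use (3) in $H$ to move the image of $A$ onto $B$ compatibly.
\end{enumerate}
The union is an isometric isomorphism between dense subgroups, and it extends to the completions. Here (4) is essential, because $\langle A,g\rangle$ must be finite.

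\textbf{Main obstacle.} The delicate step is the approximate-to-exact upgrade in existence. Elements of the completion need not lie in $G_0$, and the values on a generated finite subgroup need not be rational. I would first try to handle this with a Kubi\'{s}--Solecki style lemma: if $f\colon A\to G_0$ distorts values by at most $\varepsilon$, then there is an exact embedding within $\varepsilon$ of $f$. I would prove the lemma by amalgamating $A$ and $f(A)$ along a value that forces $p(a-f(a))\le\varepsilon$, and then check that this value is consistent with the triangle inequality.
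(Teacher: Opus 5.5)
Your outline is correct and follows essentially the route the paper attributes to Niemiec; the paper gives no proof of its own and only cites \cite[Theorem 1.1]{Niemiec}. That route is a Fra\"{\i}ss\'{e} limit of finite $Q$-valued groups in $\mathfrak{G}_r(N)$ built from the amalgamation of \cite[Lemma 2.19]{Niemiec} (you take $Q=\mathbb{Q}_{\geq 0}$), a completion shown to satisfy (2) and (3) by approximate-to-exact upgrades in the spirit of \cite[Proposition 3.10]{Niemiec}, and uniqueness by back-and-forth over finite subgroups generated by dense sets of finite-order elements.

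The one step you leave thin is in (3): before your Kubi\'{s}--Solecki lemma applies, a finite subgroup $A\leq\widehat{G_0}$ must be approximated by an almost isometric \emph{homomorphism} $A\to G_0$. Elementwise approximants $x_a\in G_0$ of the $a\in A$ need not satisfy the relations of $A$ (the one chosen for an element of order $2$ may have order $6$), so you need an extra gluing on $C\oplus A$ over $C=\langle x_a\mid a\in A\rangle\leq G_0$, with links $x_a-a$ of small cost. Its restriction to $C$ is checked to equal $p$ by computing inside $\widehat{G_0}$.
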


\begin{remark}
    By \cite[Theorem 5.8]{Niemiec} the groups $\G_r(N)$ are pairwise non-isomorphic, as topological groups, for different choices of $r\in\{1,\infty\}$ and $N\in\{0,2,3,\ldots\}$. In particular $\G_1(N)$ and $\G_\infty(N)$ are distinct as topological groups, not just as metric groups. We will often prove properties of $\G_1(N)$ and $\G_\infty(N)$ in parallel by writing the argument for  $r=\infty$ and specifying where values need to be truncated to obtain the $r=1$ case as well.
\end{remark}

The extension property of $\G_r(N)$ in the next theorem will be essential in some of the following arguments.  

\begin{theorem}[\protect{\cite[Theorem 1.2(a)]{Niemiec}}]\label{thm: extension property for G_r(N)}
    Let $(H,q)\in\GG_r(N)$, let $K\leq H$ be a closed subgroup and let $\varphi\colon K\to\G_r(N)$ be a continuous group homomorphism whose range has compact closure in $\G_r(N)$. There is a continuous group homomorphism $\Phi\colon H\to\G_r(N)$ extending $\varphi$ with $\ker(\Phi)=\ker(\varphi)$. If $\varphi$ is a topological (resp. isometric) embedding, $\Phi$ can be constructed to be a topological (resp. isometric) embedding.
\end{theorem}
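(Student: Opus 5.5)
The plan is to first prove the isometric case, then reduce the general case to it by changing the value on $H$. Within the isometric case, I would first handle a compact domain and then pass to a closed subgroup with compact range.

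\emph{Step 1: finitely generated extensions.} Assume $\varphi$ is an isometric embedding and $K$ is compact. The basic lemma I would aim for is the following. Let $A\le B$ be subgroups of $H$, with $A$ finite and $B$ finitely generated, and let $\psi\colon A\to\G_r(N)$ be an isometric embedding. Then $\psi$ extends to an isometric embedding $B\to\G_r(N)$.

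To prove it, approximate $B$ inside the class $\GG_r(N)$ by finite valued groups. Concretely, quotient by a large multiple $mB$, using class $\mathcal O_0$ to control the value of the cosets; when $N\ge 2$ this is automatic, since $B$ is already finite. Then apply property (2) of Theorem \ref{thm: Niemiec characterization} to get some isometric embedding of the resulting finite group. Finally, correct it on $A$ using property (3), which supplies an automorphism of $\G_r(N)$ carrying the image of $A$ onto $\psi(A)$ compatibly with $\psi$.

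Since these approximations are only approximate in general, I would formulate the lemma with an error $\varepsilon$: the extension agrees with $\psi$ up to $\varepsilon$ on $A$. This is the Kubi\'s--Solecki style approximate extension property.

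\emph{Step 2: passing to the limit.} Write $K$ as the closure of an increasing union of finitely generated subgroups. For $N=0$ I would use the density of torsion, via property (4) of Theorem \ref{thm: Niemiec characterization}, to take these subgroups finite up to small error. Write $H$ as the closure of an increasing union of finitely generated subgroups $H_n\supseteq K_n$.

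Then build maps $\Phi_n\colon H_n\to\G_r(N)$ inductively with errors $2^{-n}$, in the usual back-and-forth Cauchy scheme. At stage $n+1$, apply Step 1 to push $\Phi_n$ forward and to pull $\varphi$ on $K_{n+1}$ into alignment. Compactness of $\varphi(K)$ supplies finite $\varepsilon$-nets of $\varphi(K)$, so that only finitely many constraints are active at each stage.

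The maps $\Phi_n$ then converge pointwise on $\bigcup H_n$. The limit is isometric and extends $\varphi$, and it extends to $H$ because $\G_r(N)$ is complete.

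\emph{Step 3: reduction of the general case.} Let $C=\overline{\varphi(K)}$, which is compact. Form $M=(H\times C)/\overline{\Delta}$ with $\Delta=\{(k,-\varphi(k)):k\in K\}$. Equip $M$ with the quotient of a sum value $q_t(h)+p(c)$, where $q_t$ is a continuous rescaling of $q$ (e.g.\ $\min(tq,r)$ for large $t$).

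One then checks the following facts.
\begin{enumerate}
\item $M$ lies in $\GG_r(N)$: it is separable, of class $\mathcal O_0$, of exponent dividing $N$, and bounded by $1$ when $r=1$, with truncation handling the bound.
\item The map $C\to M$ is an isometric embedding.
\item The map $H\to M$ is continuous with kernel exactly $\ker\varphi$.
\end{enumerate}
Applying Steps 1--2 to $C\le M$ and composing with $H\to M$ gives $\Phi$. If $\varphi$ is a topological embedding, the map $H\to M$ can be arranged to be one as well, which gives the topological version.

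\emph{Main obstacle.} I expect the main difficulty to be Step 3, specifically showing that $C$ embeds \emph{isometrically} into $M$ while $H\to M$ stays continuous with the right kernel. The infimum defining the quotient value may strictly shrink $p$ on $C$ unless the $H$-coordinate is penalized heavily, yet a heavy penalty can spoil continuity or enlarge the kernel.

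My first attempt would be to use $q_t$ with $t\to\infty$ together with compactness of $C$. Compactness should make $\varphi$ uniformly continuous on $K$, and hence yield a modulus $\omega$ with $p(\varphi(k))\le \omega(q(k))$. I would then replace $q$ on $H$ by $\omega\circ q$, suitably subadditivized (e.g.\ via the least subadditive majorant), before forming the quotient.
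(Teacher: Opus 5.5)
The paper gives no proof of this statement; it is quoted from Niemiec. Your plan therefore has to stand on its own, and it has genuine gaps in Steps 1 and 2.

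\emph{Step 3 is fine.} You single it out as the main obstacle, but it is the easy part, and your fix works once made precise. Set $\omega(t)=\sup\{p(\varphi(k)) : k\in K,\ q(k)\le t\}$. This is finite because $\overline{\varphi(K)}$ is bounded. It tends to $0$ as $t\to0$ because a continuous homomorphism between groups with invariant metrics is uniformly continuous; compactness of $K$ plays no role. Subadditivize on $[0,\infty)$ rather than on $H$: replace $\omega$ by its least concave majorant $\hat\omega$, which is bounded, nondecreasing, subadditive, and still tends to $0$ at $0$. Then put $q'=q+\hat\omega\circ q$; boundedness of $\hat\omega$ keeps $M$ in class $\mathcal{O}_0$. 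Since $q'\ge p\circ\varphi$ on $K$, the map $C\to M$ is isometric. The map $H\to M$ is $1$-Lipschitz for $q'$, has kernel exactly $\ker\varphi$ because $K$ is closed, and is a topological embedding if $\varphi$ is. The real content of the theorem is the isometric extension over a compact subgroup.

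\emph{Step 1.} For $N\ge2$ it is fine: finitely generated groups are finite, so properties (2) and (3) of Theorem \ref{thm: Niemiec characterization} apply. For $N=0$ your mechanism fails. Class $\mathcal{O}_0$ is an \emph{upper} bound, $q(nx)/n\to0$. The quotient value $\inf_{c\in B}q(b+mc)$ on $B/mB$ approximates $q$ only if $q$ is large on $mB\setminus\{0\}$.
\begin{itemize}
\item Counterexample: take $B=\mathbb{Z}$ with $q(n)=\frac12\lvert e^{2\pi i n\theta}-1\rvert$ and $\theta$ irrational; this lies in $\GG_1(0)$. Every quotient $\mathbb{Z}/m\mathbb{Z}$ receives the zero semivalue.
\item The correct use of $\mathcal{O}_0$ is to adjoin, in a larger group of $\GG_r(0)$, an element $z$ with $nz=nx$ and value $q(nx)/n$, as in the proof of Lemma \ref{lemma: exact halves in G_r(0)}. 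Then $x-z$ is a torsion element within $q(nx)/n$ of $x$.
\item Even so, sending the generators of an infinite $B$ to nearby torsion elements gives a non-injective homomorphism into a finite group. You only get homomorphisms $B\to\G_r(0)$ that are almost isometric on finite sets. The exact form of Step 1 is the case $K=A$ of the theorem itself and does not follow from (2) and (3).
\item Property (4) concerns $\G_r(0)$, not $K$. A compact $K$ can be torsion-free (e.g.\ $\mathbb{Z}_p$ with its $p$-adic value), so there may be no finite subgroups $K_n$ to take.
\end{itemize}

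\emph{Step 2 (the more serious gap).} It omits the step that carries the argument. At stage $n+1$, $\Phi_n$ on $H_n$ and $\varphi$ on $K_{n+1}$ agree only approximately. They do not define a homomorphism to which Step 1, an extension of an \emph{exact} isometric embedding, could be applied. Pulling $\varphi$ into alignment needs an approximate amalgamation lemma in $\GG_r(N)$, analogous to the Kubi\'s--Solecki lemma on $\varepsilon$-isometries. Concretely, you need $W\in\GG_r(N)$ with these properties:
\begin{itemize}
\item $W$ contains $H_{n+1}$ and a finite piece $F$ of $\G_r(N)$ isometrically;
\item in $W$, $h$ is $\eta$-close to $\Phi_n(h)$ and $k$ is $\eta$-close to $\varphi(k)$;
\item $W$ is then embedded over $F$.
\end{itemize}
Such a $W$ can be built from $q(x)+p(f)$ by an infimum over the graph of the identification, with a cost $\eta$ on its nonzero elements.

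Such a $W$ can exist only if $\lvert p(\Phi_n(h)+\varphi(k))-q(h+k)\rvert\le 2\eta$ for $h\in H_n$, $k\in K_{n+1}$. So the inductive invariant must be almost-compatibility of $\Phi_n(H_n)$ with \emph{all} of $\varphi(K)$, not closeness to $\varphi$ on $K_n$; this is where a finite net, i.e.\ compactness, enters. Suppose $\Phi_n(h)$ was placed far from $\varphi(k)$ while $q(h-k)$ is small for some $k\in K_{n+1}$. No small correction can repair this, since any isometric $\Phi_{n+1}$ satisfies $p(\Phi_{n+1}(h)-\Phi_{n+1}(k))=q(h-k)$.

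For $N=0$ you must also keep $W$ finite in order to embed it exactly. This forces you first to enlarge $H$ to a group of $\GG_r(0)$ with dense torsion, by iterating the root adjunction above. You then have to track $\varphi(K)$ only through torsion approximations. None of this is in the proposal.
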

We will also need the following criterion, due to Niemiec, ensuring that a valued
groups $(G,p)$ satisfies that $\widehat{(G,p)}$ is isomorphic to $\mathbb{G}_r(N)$,
in terms of an approximate extension property. For $0<\varepsilon<1$ we say
that a homomorphism $u\colon (G,p)\to(F,q)$ between valued groups is \emph{$%
\varepsilon$-almost isometric} if 
\begin{equation*}
(1-\varepsilon)p(x)\leq q(u(x))\leq(1+\varepsilon)p(x),
\end{equation*}
for every $x\in G$. We say that $Q\subseteq[0,\infty)$ is \emph{admissible}
if $Q$ is countable, $0\in Q$, $Q$ is dense in $[0,\infty)$, and $%
Q+Q\subseteq Q$. Given an admissible set $Q$, a valued Abelian group $(G,p)$
is called \emph{$Q$-valued} if $p(G)\subseteq Q$.

\begin{proposition}[{{\protect\cite[Proposition 3.10(C)]{Niemiec}}}]
\label{prop: characterizing Niemiec groups in terms of approximate extension
property} Let $Q$ be an admissible set and let $(G,p)$ be a countable $Q$%
-valued Abelian group in $\mathfrak{G}_r(N)$ satisfying the following two
conditions:

\begin{enumerate}
\item whenever $(H,q)\in\mathfrak{G}_r(N)$ is a finite $Q$-valued group, $%
K\leq H$ is a subgroup and $\varphi\colon K\to G$ is an isometric group
embedding, then, for every $0<\varepsilon<1$, there exists an $\varepsilon$%
-almost isometric group embedding $\psi\colon (H,q)\to (G,p)$ satisfying 
\begin{equation*}
\|\psi_{|_K}-\varphi\|_\infty\leq\varepsilon,
\end{equation*}

\item the set of elements of finite order is dense in $G$.
\end{enumerate}

Then $\widehat{(G,p)}$ is isometrically group-isomorphic to $\mathbb{G}_r(N)$%
.
\end{proposition}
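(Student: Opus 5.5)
The plan is to show that the completion $\widehat{(G,p)}$ satisfies the four properties of Theorem \ref{thm: Niemiec characterization}; uniqueness there then identifies it with $\mathbb{G}_r(N)$.

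\textbf{Routine properties.} I would first check (1) and (4).
\begin{itemize}
\item Completeness holds by construction.
\item The bound $p\leq 1$ (when $r=1$) passes to limits, and so does the identity $Nx=0$ (when $N\geq 2$).
\item For class $\mathcal{O}_0$, subadditivity gives $p(nx)\leq p(ny)+n\,p(x-y)$. Hence $\limsup_n p(nx)/n\leq p(x-y)$ for any $y\in G$, which can be made arbitrarily small.
\item Property (4) follows from hypothesis (2), since $G$ is dense in its completion.
\end{itemize}

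\textbf{Approximate-to-exact lemma.} This is the core tool. Let $(H,q)$ be a finite group in $\mathfrak{G}_r(N)$, $K\leq H$, and $\varphi\colon K\to\widehat{(G,p)}$ an isometric embedding (first assume $\varphi(K)\subseteq G$). I want an exact isometric embedding $\Phi\colon H\to\widehat{(G,p)}$ extending $\varphi$. I would build $\psi_n\colon H\to G$ that are $\varepsilon_n$-almost isometric, with $\sum_n\|\psi_{n+1}-\psi_n\|_\infty<\infty$ and $\|\psi_n|_K-\varphi\|_\infty\to 0$. The limit $\Phi=\lim\psi_n$ is then an isometric homomorphism.

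The step from $\psi_n$ to $\psi_{n+1}$ uses an amalgamation in the spirit of Kubi\'{s}--Solecki. On $H\oplus\psi_n(H)$, take the largest value that:
\begin{itemize}
\item restricts to $q$ on $H$;
\item restricts to $p$ on $\psi_n(H)$;
\item makes $h$ and $\psi_n(h)$ at distance about $\varepsilon_n\,q(h)$, plus a small constant.
\end{itemize}
This is a graph-metric infimum over decompositions, and it stays in $\mathfrak{G}_r(N)$ after truncating at $1$ for $r=1$. After a small perturbation, rescale it so that it is $Q$-valued, which is possible since $Q$ is dense and closed under addition. Hypothesis (1), applied with $K=\psi_n(H)$, then gives $\psi_{n+1}$ that is much closer to an isometry and uniformly close to $\psi_n$.

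When $\varphi(K)\not\subseteq G$, I would first approximate $\varphi$ by homomorphisms into $G$. To do this, move each generator of $K$ to a nearby torsion element of $G$ of the right order; the supply of such elements comes from hypothesis (2) together with hypothesis (1) applied to small cyclic groups. Then absorb the resulting error into the same amalgamation step.

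\textbf{Properties (2) and (3).} Property (2) is the case $K=0$ of the lemma. For (3), let $f\colon A\to B$ be an isometric isomorphism between finite subgroups of the completion. I would run a back-and-forth along enumerations of the countable dense set $G$. At each stage, use the lemma to extend a partial isometric embedding defined on a finite subgroup to a subgroup generated by one more element, in the forward and in the backward direction. This is done inside the completion, with the finite intermediate groups built via the amalgamation construction. The union of the stages is an isometry defined on a dense subgroup with dense range, and it extends to an automorphism of $\widehat{(G,p)}$.

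\textbf{Main obstacle.} The hardest step will be the correction step: turning $\varepsilon$-almost isometric embeddings into exact ones with summable errors, while keeping every auxiliary group finite, $Q$-valued and inside $\mathfrak{G}_r(N)$ (torsion and exponent constraints included). The second difficulty is handling finite subgroups of the completion that are not contained in $G$. For this I would try first to approximate them by isomorphic finite subgroups of $G$, using the density of torsion elements.
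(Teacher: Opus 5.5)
The paper gives no proof of this statement (it is quoted from Niemiec), so your argument has to stand on its own. Its architecture is sound: checking the four properties of Theorem \ref{thm: Niemiec characterization} for $\widehat{(G,p)}$ and invoking uniqueness is legitimate, and your routine part is correct.

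\textbf{The main gap.} It is in the approximate-to-exact lemma. You need $\Phi|_K=\varphi$ \emph{exactly}, because property (3) is exact. In your correction step $\varphi$ never enters: you amalgamate $H$ only with $\psi_n(H)$ and apply hypothesis (1) relative to $\psi_n(H)$. This makes $\psi_{n+1}$ close to $\psi_n$, but nothing pulls $\psi_{n+1}|_K$ toward $\varphi$. All you get is $\|\psi_{n+1}|_K-\varphi\|_\infty\le\|\psi_n|_K-\varphi\|_\infty+\|\psi_{n+1}-\psi_n\|_\infty$. So the error on $K$ accumulates instead of tending to $0$, and the limit is an isometric embedding that is only \emph{close} to $\varphi$ on $K$. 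That gives property (2) but not (3).

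The missing idea is to put $\varphi(K)$ into the amalgam. Suppose first $\varphi(K)\subseteq G$.
\begin{enumerate}
\item Let $L=\langle\psi_n(H),\varphi(K)\rangle\le G$; it is finite, being generated by finitely many torsion elements.
\item Form the pushout $W=(H\oplus L)/\{(k,-\varphi(k)):k\in K\}$.
\item Let $w$ be the largest subadditive function on $W$ bounded by $q$ on $H$, by $p$ on $L$, and by a constant $\delta$ on the classes of $(a,-\psi_n(a))$ for $a\neq 0$.
\item If $\delta\ge\|\psi_n|_K-\varphi\|_\infty+\varepsilon_n\max_H q$, one checks that $w|_H=q$ and $w|_L=p$.
\end{enumerate}
Now apply (1) relative to $L$. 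Since $[k,0]=[0,\varphi(k)]$ in $W$, you get $\|\psi_{n+1}|_K-\varphi\|_\infty\le\varepsilon'$ with $\varepsilon'$ as small as you like. Meanwhile $\|\psi_{n+1}-\psi_n\|_\infty$ is at most a constant times $\delta+\varepsilon'$, which is summable.

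Also, the passage to a $Q$-valued value cannot be a rescaling. Hypothesis (1) requires the inclusion $L\to G$ to stay isometric, so $w$ must be kept unchanged on $L$. Round the other values up into $Q$ and take the path-closure, truncating at a suitable element of $Q$ when $r=1$.

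\textbf{Further repairs.} Your reduction for $\varphi(K)\not\subseteq G$ via hypothesis (1) for small cyclic groups does not work as stated. Hypothesis (1) reproduces the prescribed subgroup only up to $\varepsilon$. So correcting a nearby torsion element $g_0\in G$ by a small element obtained this way only shrinks the defect $p(mg_0)$. It does not produce an element of order exactly $m$, the order of the target $y$.

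What works instead:
\begin{enumerate}
\item Take torsion elements $g_1,\dots,g_t\in G$ close to generators of $\varphi(K)$.
\item Apply (1) to the finite subgroup $M=\langle\varphi(K),g_1,\dots,g_t\rangle$ of $\widehat{(G,p)}$, relative to $\langle g_1,\dots,g_t\rangle\le G$. Make the value of $M$ $Q$-valued while keeping it fixed on $\langle g_1,\dots,g_t\rangle$.
\item The resulting $\psi$ is injective, so $\psi\circ\varphi\colon K\to G$ is an almost isometric embedding uniformly close to $\varphi$.
\end{enumerate}
Add such an approximant $\varphi_{n+1}(K)\subseteq G$ to $L$ and apply (1) relative to $\langle\psi_n(H),\varphi_{n+1}(K)\rangle$. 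This handles the correction step in general.

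Finally, the back-and-forth must enumerate the torsion elements of $G$ (dense by hypothesis (2)), not all of $G$. $G$ may contain elements of infinite order, and then $\langle A_n,g\rangle$ is infinite and your lemma does not apply.
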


Given a countable Abelian group $G$, we denote by $\mathfrak{M}_\infty(G)$
the space of all values on $G$ and by $\mathfrak{M}_1(G)$ the subspace of
values bounded by $1$. We consider $\mathfrak{M}_r(G)$ as a subspace of $%
[0,1]^G$ or $[0,\infty)^G$, depending on whether $r=1$ or $r=\infty$, and we
equip it with the subspace topology induced by the product topology. Recall
that a \emph{semivalue} $p$ on a group $G$ is obtained by replacing the
first condition in the definition of a value by $p(0)=0$. In particular, the
space of all semivalues on $G$ is closed in $[0,\infty)^G$, since
it is defined by closed conditions. The space of values $\mathfrak{M}%
_\infty(G)$ is $G_\delta$ in the space of semivalues, since it is defined by
the intersection of the countably many open conditions $p(g)>0$ as $g$
ranges over the nonzero elements of $G$. It follows that both $\mathfrak{M}%
_\infty(G)$ and its closed subspace $\mathfrak{M}_1(G)$ are Polish. Whenever
we say that the generic value on a countable Abelian group $G$ satisfies
some property, we mean that the set of values in $\mathfrak{M}_1(G)$ or $%
\mathfrak{M}_\infty(G)$, depending on the context, that satisfy this
property is comeager.

As mentioned in the introduction we will use a result of Melleray and
Tsankov concerning the extreme amenability of the generic completion of a
countable group, but we need a few more definitions before being able to
state it. Recall that an Abelian group $G$ is said to have \emph{bounded
exponent} if there is some $n\in\mathbb{N}$ such that $nx=0$ for every $x\in
G$, and that $G$ is called \emph{unbounded} otherwise. By a classical result
of Pr\"{u}fer, if $G$ is an Abelian group with bounded exponent, then $G$
can be written as a direct sum of cyclic groups, called its \emph{Pr\"{u}fer
decomposition}. When $G$ is an Abelian group with bounded exponent we denote
by $m_G(p^n)$ the multiplicity of $\mathbb Z/p^n\mathbb Z$ in the Pr\"{u}fer
decomposition of $G$. The following condition was considered by Melleray and
Tsankov \cite{melleray2013generic}

\begin{equation}
m_G(p^n)>0\implies \exists k\geq n\,\left(m_G(p^k)=\infty\right)
\label{eq: exponent condition}
\end{equation}

and used to provide a criterion for the extreme amenability of the generic
metric completion of a countable group:

\begin{theorem}[{{\protect\cite[Theorem 6.4]{melleray2013generic}}}]
\label{thm: criterion for extreme amenability}  Let $G$ be a countable
Abelian group. If either $G$ is unbounded or $G$ is bounded and satisfies \eqref{eq:
exponent condition}, then the set of invariant metrics on $G$ whose
completion is extremely amenable is a dense $G_\delta$ set in the Polish
space of all invariant metrics on $G$.
\end{theorem}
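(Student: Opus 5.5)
The plan is to split the statement into a descriptive-set-theoretic half, that the good metrics form a $G_\delta$ set, and a combinatorial half, that this set is dense. Both halves run through a finitary reformulation of extreme amenability that only involves the countable dense subgroup $G$. I would use the standard characterization by finite oscillation stability, in the form of Pestov's criterion. A Polish group $\widehat{G}$ with a dense subgroup $G$ is extremely amenable iff the following holds for every finite $F\subseteq G$ and every $\varepsilon>0$. There must be a finite $K\subseteq G$ such that every $1$-Lipschitz function $f\colon K\to[0,1]$, with respect to $d$, admits some $g\in G$ with $g+F\subseteq K$ and $\operatorname{osc}(f{\restriction}(g+F))<\varepsilon$.

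\textbf{Step 1 ($G_\delta$).} Fix $F$, $\varepsilon$ and $K$. Being $1$-Lipschitz on a finite set is a finite system of inequalities in the values $d(x,y)$ for $x,y\in K$. By compactness of $[0,1]^K$, the failure of the condition is witnessed by some $f$ satisfying these inequalities with a little room to spare, and such a witness survives small perturbations of $d$. Hence, after replacing $\varepsilon$ by a slightly smaller rational, the property \enquote{$K$ works for $(F,\varepsilon)$} contains an open set of metrics that still captures the criterion. Taking the union over $K$ and the intersection over the countably many pairs $(F,\varepsilon)$ with $\varepsilon$ rational gives a $G_\delta$ set. Some care is needed to make the strict and non-strict inequalities line up, but this is routine.

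\textbf{Step 2 (density), the main obstacle.} Fix a basic open set: a finite set $A\subseteq G$, which we may take to generate a finite subgroup in the torsion case, a metric $d_0$, and $\delta>0$. I want an invariant metric $d$ on $G$ that is $\delta$-close to $d_0$ on $A$ and satisfies the finitary criterion for the single pair $(F,\varepsilon)$ with $F\subseteq A$. A Baire-category iteration over all pairs then finishes, because density of each open set $U_{F,\varepsilon}$ is all that is needed. To build $d$, I would find inside $G$ a large finite subgroup $H\supseteq\langle A\rangle$ of the form $\langle A\rangle\oplus(\mathbb Z/p^k\mathbb Z)^m$ with $m$ huge, or a suitable analogue when $A$ contains elements of infinite order. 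On $H$ I would put a metric that agrees with $d_0$ on $A$ and whose normalized counting measure concentrates.

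A natural choice is the following. Embed $\langle A\rangle$ diagonally into a power $(\mathbb Z/p^k)^m$ and use a normalized Hamming-type value, $\lambda(x)=\frac1m\sum_i\rho(x_i)$, mixed with $d_0$. Talagrand's inequality, or the Hamming-cube concentration of Example \ref{example: discrete spaces form a Levy family}, then shows that $1$-Lipschitz functions are nearly constant off a set of small measure. An averaging argument over translates $g+F$ produces the required $g$ with small oscillation. Finally, extend this metric from $H$ to all of $G$ by the standard amalgamation and extension of invariant metrics from a subgroup, for instance a Graev-type construction, without changing it on $H$.

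The exponent condition \eqref{eq: exponent condition} is exactly what guarantees that the diagonal copy can be found inside $G$. Whenever $\mathbb Z/p^n$ appears in $G$, there are infinitely many summands $\mathbb Z/p^k$ with $k\ge n$, so $\langle A\rangle$ can be pushed into arbitrarily large powers inside $G$. In the unbounded case, elements of arbitrarily large order, or of infinite order, play the same role, with cyclic groups of large order equipped with concentrating metrics. I expect the hardest point to be the construction of the concentrating metric on $H$ that restricts to within $\delta$ of $d_0$ on $A$ while remaining invariant.
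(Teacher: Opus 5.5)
The paper gives no proof of this statement. It is quoted from Melleray--Tsankov and is ultimately used only for the groups $\Gamma_N$, so your proposal has to stand on its own, and it does not cover the unbounded case.

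\textbf{The gap.} Your density step needs a finite subgroup $H\le G$ containing $A$ in which $\langle A\rangle$ sits as the diagonal of a large power, so that the counting measure on $H$ concentrates for an averaged value. In many unbounded groups no such $H$ exists:
\begin{itemize}
\item In $\mathbb{Z}$, or whenever $A$ contains an element of infinite order, no finite subgroup contains $A$ at all.
\item In $\mathbb{Z}(p^\infty)$ or $\bigoplus_p\mathbb{Z}/p\mathbb{Z}$ every finite subgroup is cyclic, so $\langle a\rangle^m$ does not embed for $a\neq 0$, $m\ge 2$.
\item In $\mathbb{Z}\oplus\mathbb{Z}/2\mathbb{Z}$ the only nonzero finite subgroup is $\{0,t\}$ with $t=(0,1)$. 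So even the torsion set $A=\{0,t\}$ cannot be spread out inside a finite subgroup.
\end{itemize}
All of these groups are unbounded and fall under the theorem. Your sentence that elements of large or infinite order ``play the same role'' is exactly where an argument is needed. Likewise, ``cyclic groups of large order equipped with concentrating metrics'' states the goal rather than a construction, and $\mathbb{Z}$ has no such subgroups.

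\textbf{What is missing} is a source of concentration that does not come from counting measure on a finite subgroup of $G$. For $G=\mathbb{Z}$ with $A\subseteq[-R,R]$, one route that works is as follows.
\begin{itemize}
\item Choose $q$ large and an invariant value $\rho$ on $\mathbb{T}$ with $\rho(n/q)=d_0(n,0)$ for $|n|\le R$. This comes from the path metric on $\mathbb{Z}/q\mathbb{Z}$ induced by $d_0$, extended to $\mathbb{T}$.
\item Take $\alpha_1,\dots,\alpha_m$ close to $1/q$ with $1,\alpha_1,\dots,\alpha_m$ rationally independent, and set $d(n,n')=\frac1m\sum_j\rho((n-n')\alpha_j)$.
\item Haar measure on $\mathbb{T}^m$ with the averaged value concentrates by the bounded-differences inequality. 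By Kronecker, a large interval $K$ has $\varepsilon/4$-dense image in $\mathbb{T}^m$.
\item A McShane extension of a $1$-Lipschitz $f$ on $K$ to $\mathbb{T}^m$ then gives the small-oscillation translate inside $K$.
\end{itemize}
For general unbounded $G$ the same scheme runs with homomorphisms into tori. One realizes $d_0$ on $A-A$ through a finite quotient of $\langle A\rangle$ embedded in a torus, and extends to $G$ by divisibility of the torus. The key input is that $\ell G$ is infinite for every $\ell\ge 1$. This makes each $\widehat{G}[\ell]$ nowhere dense, so $\mathbb{Z}$-independent tuples of characters are generic and the image of $G$ is dense. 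One adds a small multiple of a fixed value so the pull-back is a genuine metric. Without an ingredient of this kind (tori, or $L^0(\mu,\mathbb{T})$), your argument covers only torsion groups in which every finite subgroup has diagonal copies of its powers inside $G$.

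\textbf{Two smaller points.}
\begin{itemize}
\item In the bounded case your argument is sound and uses the same mechanism as the paper's proof that $\G_r(N)$ is L\'{e}vy. However, $H$ should be a copy of $S^m$ with $S$ on the diagonal, where $S\supseteq\langle A\cup F\rangle$ is a finite sum of cyclic summands; it is not ``$\langle A\rangle\oplus(\mathbb{Z}/p^k\mathbb{Z})^m$''. Condition \eqref{eq: exponent condition} lets you build it summand by summand. For $s$ of order $p^n$, pick $f_1,\dots,f_{m-1}$ of order $p^n$ in fresh summands $\mathbb{Z}/p^k\mathbb{Z}$ with $k\ge n$, and map the $m$ copies of $s$ to $s+f_1+\dots+f_{m-1},-f_1,\dots,-f_{m-1}$. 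The averaged value $\frac1m\sum_i d_0(s_i,0)$ then agrees with $d_0$ exactly on the diagonal, and extending by a constant off $H$ finishes. So the step you flag as hardest is easy. The same argument handles every $\Gamma_N$, including the unbounded $\Gamma_0$, via the Fra\"{\i}ss\'{e} extension property.
\item Step 1 as written points the wrong way: persistence of failure witnesses shows that failure is open. The correct argument is this. For fixed $K$, compactness gives a uniform slack $\varepsilon'<\varepsilon$. If $d'$ is close to $d$ on $K$, every $d'$-$1$-Lipschitz function is $(1+\eta)$-Lipschitz for $d$. Hence the set of metrics satisfying the criterion for $(F,\varepsilon)$ is itself open.
\end{itemize}
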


In \cite{melleray2013generic}, Theorem 6.4 is stated in terms of the extreme
amenability of $(G,d)$ itself rather than of its completion; the two
formulations are equivalent, since a topological group is extremely amenable
if and only if its completion is (see the remarks in \cite[Section 3]%
{melleray2013generic}).

For every admissible set $Q$, Niemiec constructs a countable $Q$-valued group
$Q\mathbb{G}_r(N)$ as the Fra\"{\i}ss\'{e} limit of the finite $Q$-valued
groups in $\mathfrak{G}_r(N)$, and the completion of $Q\mathbb{G}_r(N)$ is
isometrically group-isomorphic to $\mathbb{G}_r(N)$ \cite[Theorem 3.2 and
Proposition 3.10(A)]{Niemiec}. See \cite{fraisse1954classifications} and
\cite[Chapter 7]{hodges1993model} for background on Fra\"{\i}ss\'{e} theory.
In particular, the following amalgamation lemma will be important:

\begin{lemma}[{{\protect\cite[Lemma 2.19]{Niemiec}}}]
\label{lemma: amalgamation}  Let $(G_i,p_i)\in\mathfrak{G}_r(N)$ for $%
i\in\{0,1,2\}$ and, for $j\in\{1,2\}$, let $\varphi_j\colon G_0\to G_j$ be
isometric embeddings. Then there are a valued group $(G,p)\in\mathfrak{G}%
_r(N)$ and isometric embeddings $\psi_j\colon G_j\to G$, for $j\in\{1,2\}$,
such that $\psi_1\circ\varphi_1=\psi_2\circ\varphi_2$. If, in addition, $%
Q\subseteq[0,\infty)$ is a set containing $0$, dense in $[0,\infty)$, and
satisfying $Q+Q\subseteq Q$ (countability of $Q$ is not required here), and $%
G_1$ and $G_2$ are finite and $Q$-valued, then $G$ is also finite and $Q$%
-valued. In particular, taking $Q=[0,\infty)$, if $G_1$ and $G_2$ are
finite, then $G$ can be taken to be finite.
\end{lemma}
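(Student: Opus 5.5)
We use the standard free amalgam of valued groups, i.e. the pushout of $G_1$ and $G_2$ over $G_0$ equipped with the largest value compatible with the data, and then truncate when $r=1$. Let $K=\{(\varphi_1(x),-\varphi_2(x)) : x\in G_0\}\leq G_1\oplus G_2$ and let $G'=(G_1\oplus G_2)/K$. Write $[a,b]$ for the class of $(a,b)$. On $G'$ define
\[
p'([a,b])=\inf_{x\in G_0}\bigl(p_1(a-\varphi_1(x))+p_2(b+\varphi_2(x))\bigr).
\]
The infimum runs over all representatives of the class, so $p'$ is well defined on $G'$.

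\textbf{Step 1: $p'$ is a semivalue.} We have $p'(0)=0$ by taking $x=0$. Symmetry follows by replacing $x$ with $-x$. For subadditivity, take near-optimal $x$ for $[a,b]$ and $y$ for $[a',b']$, use $x+y$ for $[a+a',b+b']$, and apply the triangle inequalities for $p_1$ and $p_2$.

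\textbf{Step 2: the maps $a\mapsto[a,0]$ and $b\mapsto[0,b]$ are isometric.} We have
\[
p'([a,0])=\inf_x \bigl(p_1(a-\varphi_1 x)+p_1(\varphi_1 x)\bigr)\geq p_1(a),
\]
using that $\varphi_1$ is isometric, and equality holds at $x=0$. The same argument works on the $G_2$ side. The two maps agree on $G_0$ because $[\varphi_1(x),0]=[0,\varphi_2(x)]$.

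\textbf{Step 3: pass to a value.} The set $Z=\{g: p'(g)=0\}$ is a subgroup. By Step 2 it meets the images of $G_1$ and $G_2$ trivially. Put $G=G'/Z$ with the induced value $p$. Then the compositions $\psi_j\colon G_j\to G$ are isometric embeddings with $\psi_1\circ\varphi_1=\psi_2\circ\varphi_2$. In the finite case the infimum is a minimum, so $Z$ is already trivial.

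\textbf{Step 4: check membership in $\mathfrak{G}_r(N)$.} Separability holds because $G$ is the image of the separable group $G_1\oplus G_2$, and the defining sum is continuous. For class $\mathcal O_0$, taking $x=0$ gives $p(n[a,b])\leq p_1(na)+p_2(nb)$, so dividing by $n$ yields $p(n[a,b])/n\to 0$. For exponent dividing $N$, note that $G$ is a quotient of $G_1\oplus G_2$.

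\textbf{Step 5: the case $r=1$.} Here $p$ must be replaced by a truncation $\min(p,M)$ with $M\leq 1$. A truncation $\min(p,M)$ of a value by a positive constant is again a value. It keeps $\psi_j$ isometric as long as $p_1,p_2\leq M$. For general groups take $M=1$.

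\textbf{Step 6: the finite $Q$-valued case.} If $G_1$ and $G_2$ are finite, $G$ is a finite quotient, and each value is a minimum of sums $q_1+q_2$ with $q_i\in Q$, hence lies in $Q$ because $Q+Q\subseteq Q$. I expect this to be the only real obstacle, specifically when $r=1$: we need $M\in Q$, and $1$ need not belong to $Q$. The fix is to truncate instead at
\[
M=\max\bigl(p_1(G_1)\cup p_2(G_2)\bigr).
\]
This $M$ lies in $Q$ and satisfies $M\leq 1$. If both groups are trivial, take $G=G_1$ and there is nothing to do. With this choice, $\min(p,M)$ is $Q$-valued, bounded by $1$, and the $\psi_j$ remain isometric. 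The last clause of the lemma is the special case $Q=[0,\infty)$.
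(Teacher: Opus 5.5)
Your proof is correct. It uses the standard free amalgam: the pushout $(G_1\oplus G_2)/K$ with the infimal-convolution value $p'([a,b])=\inf_x\bigl(p_1(a-\varphi_1 x)+p_2(b+\varphi_2 x)\bigr)$, quotiented by the null subgroup and truncated in the bounded case. This is presumably the argument behind Niemiec's Lemma 2.19, which the paper cites without proof. Your truncation at $M=\max\bigl(p_1(G_1)\cup p_2(G_2)\bigr)$ rather than at $1$ correctly keeps the amalgam $Q$-valued when $r=1$ and $1\notin Q$.
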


\section{\texorpdfstring{The generic completion of $\Gamma_N$}{The generic completion of GammaN}}

\label{section: extreme amenability} Let $\mathfrak{F}_N$ be the class of
finite Abelian groups of exponent dividing $N$, for $N\geq 2$, and let $%
\mathfrak{F}_0$ be the class of all finite Abelian groups. By Lemma \ref%
{lemma: amalgamation}, $\mathfrak{F}_N$ is a Fra\"{\i}ss\'{e} class (we can
amalgamate discrete groups by considering them as valued groups, applying
the Lemma and forgetting the values), and we denote by $\Gamma_N$ its Fra%
\"{\i}ss\'{e} limit, which is a countable, locally finite group with age $%
\mathfrak{F}_N$ and the following extension property: whenever $B\in%
\mathfrak{F}_N$, $A\leq B$ and $f\colon A\to \Gamma_N$ is a group embedding,
there exists a group embedding $g\colon B\to\Gamma_N$ extending $f$. More
concretely, we have
\begin{equation*}
\Gamma_0\cong\bigoplus_{n\in\mathbb{N}}\mathbb{Q}/\mathbb{Z}\quad\text{and}\quad
\Gamma_N\cong\bigoplus_{n\in\mathbb{N}}\mathbb{Z}/N\mathbb{Z},
\qquad\text{for }N\geq 2.
\end{equation*}

First we verify that $\Gamma_N$ satisfies the hypothesis of Theorem \ref%
{thm: criterion for extreme amenability}. Afterward we will prove that the
generic metric completion of $\Gamma_N$ is isometrically group-isomorphic to
either $\mathbb{G}_\infty(N)$ or $\mathbb{G}_1(N)$ depending on whether we
work in the Polish space of all metrics on $\Gamma_N$ or those bounded by $1$%
.

\begin{lemma}
\label{lemma: hypothesis of criterion for extreme amenability are satisfied}
The group $\Gamma_0$ is unbounded. The group $\Gamma_N$ is bounded and
satisfies $\eqref{eq: exponent condition}$ for every $N\geq 2$.
\end{lemma}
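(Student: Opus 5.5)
We work with the concrete descriptions $\Gamma_0\cong\bigoplus_{n\in\mathbb N}\mathbb Q/\mathbb Z$ and $\Gamma_N\cong\bigoplus_{n\in\mathbb N}\mathbb Z/N\mathbb Z$ recalled above. If one prefers to avoid them, everything below can also be read off from the Fra\"{\i}ss\'{e} extension property: $\Gamma_N$ embeds every member of $\mathfrak F_N$, and every element of $\Gamma_N$ lies in a finite subgroup of exponent dividing $N$.

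\textbf{Unboundedness of $\Gamma_0$.} It suffices to find, for each $n\geq 1$, an element of $\Gamma_0$ of order $n$. The element $1/n+\mathbb Z$ in one coordinate copy of $\mathbb Q/\mathbb Z$ has order exactly $n$. Equivalently, $\mathbb Z/n\mathbb Z\in\mathfrak F_0$ embeds into $\Gamma_0$. Hence no single $n$ annihilates $\Gamma_0$.

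\textbf{$\Gamma_N$ is bounded.} For $N\geq 2$, every element satisfies $Nx=0$, so $\Gamma_N$ has bounded exponent.

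\textbf{Condition \eqref{eq: exponent condition} for $N\geq 2$.} Write the prime factorization $N=\prod_{i} p_i^{e_i}$. By the Chinese remainder theorem, $\mathbb Z/N\mathbb Z\cong\bigoplus_i \mathbb Z/p_i^{e_i}\mathbb Z$. Therefore
\[
\Gamma_N\cong\bigoplus_i\bigoplus_{n\in\mathbb N}\mathbb Z/p_i^{e_i}\mathbb Z .
\]
This is a Pr\"ufer decomposition, and by uniqueness of the multiplicities in such decompositions it determines the function $m_{\Gamma_N}$:
\begin{itemize}
\item $m_{\Gamma_N}(p_i^{e_i})=\infty$ for each $i$;
\item $m_{\Gamma_N}(p^n)=0$ for every other prime power $p^n$.
\end{itemize}

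Now suppose $m_{\Gamma_N}(p^n)>0$. Then $p=p_i$ and $n=e_i$ for some $i$. Taking $k=n$ gives $m_{\Gamma_N}(p^k)=\infty$, which is exactly \eqref{eq: exponent condition}.

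\textbf{Where care is needed.} The only delicate point is justifying that the multiplicities $m_G(p^n)$ are well defined. We invoke the standard uniqueness of the Pr\"ufer decomposition. Concretely, $m_G(p^n)$ equals the $\mathbb Z/p\mathbb Z$-dimension of
\[
(p^{n-1}G)[p]\big/(p^{n}G)[p],
\]
an invariant of $G$. In our case this quantity is visibly infinite exactly when $p^n=p_i^{e_i}$ for some $i$, and zero otherwise.
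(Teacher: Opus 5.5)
Your proof is correct and follows the paper's argument. Unboundedness of $\Gamma_0$ comes from elements of every finite order. For $N\geq 2$ the explicit form $\bigoplus_{n}\mathbb Z/N\mathbb Z$ shows that every nonzero multiplicity $m_{\Gamma_N}(p^n)$ is already infinite, so one may take $k=n$. You only spell out what the paper leaves to the reader, namely the Chinese remainder decomposition and the well-definedness of the multiplicities via $\dim_{\mathbb Z/p\mathbb Z}(p^{n-1}G)[p]/(p^{n}G)[p]$.
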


\begin{proof}
Since $\Gamma_0$ is universal for finite Abelian groups, it contains
elements of arbitrarily high order and is unbounded.

For $N\geq 2$ it follows from the explicit description of $\Gamma_N$ above
that, for any prime $p$ and $n\in\mathbb{N}$, if $m_{\Gamma_N}(p^n)>0$, then
we already have $m_{\Gamma_N}(p^n)=\infty$, so that condition \eqref{eq:
exponent condition} is satisfied.  
\end{proof}

It remains to establish that the generic metric completion of $\Gamma_N$ is $%
\mathbb{G}_r(N)$. Fix $r\in\{1,\infty\}$ and $N\in\{0,2,3,\ldots\}$. There
is a difficulty in using Proposition \ref{prop: characterizing Niemiec
groups in terms of approximate extension property} in that we do not know
the admissible set $Q$ beforehand. We work around this issue by introducing
some codes (the expressions $P_h$ below) that represent $Q$ independently of
a value $\lambda$ and allow us to encode the triples $(H,K,\varphi)$ such
that $K\leq H$ and $\varphi\colon K\to\Gamma_N$ is an isometric embedding
before choosing a value $\lambda$ on $\Gamma_N$.

\begin{theorem}
\label{thm: generic metric completion is isomorphic to Niemiec's universal
group}  For every $r\in\{1,\infty\}$ and every $N\in\{0,2,3,\ldots\}$, the
set 
\begin{equation*}
\mathcal{F}_{r,N}=\{\lambda\in\mathfrak{M}_r(\Gamma_N)\mid \widehat{%
(\Gamma_N,\lambda)} \text{ is isometrically group-isomorphic to }\mathbb{G}%
_r(N)\}
\end{equation*}
is dense and comeager in $\mathfrak{M}_r(\Gamma_N)$.
\end{theorem}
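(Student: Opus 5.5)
The plan is to show that $\mathcal{F}_{r,N}$ contains a dense $G_\delta$ subset of $\mathfrak{M}_r(\Gamma_N)$, which is enough since $\mathfrak{M}_r(\Gamma_N)$ is Polish. The $G_\delta$ set will be defined by a countable family of approximate extension conditions, modelled on condition (1) of Proposition \ref{prop: characterizing Niemiec groups in terms of approximate extension property}. Condition (2) of that proposition is automatic, because $\Gamma_N$ is torsion. For the same reason every value on $\Gamma_N$ is of class $\mathcal{O}_0$: $p(nx)$ takes only finitely many values as $n$ varies.

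\textbf{Codes.} Since $\lambda$ is not known in advance, the data must be coded independently of $\lambda$. A code $P_h$ will consist of:
\begin{itemize}
\item[(i)] a finite subgroup $K\leq\Gamma_N$;
\item[(ii)] a finite group $H\in\mathfrak{F}_N$ containing $K$, given concretely;
\item[(iii)] a rational-valued value $q$ on $H$, bounded by $1$ if $r=1$;
\item[(iv)] a rational $\varepsilon\in(0,1)$.
\end{itemize}
There are countably many codes. For each code let $U_h$ be the set of $\lambda$ satisfying the following implication. If $|\lambda(k)-q(k)|<\varepsilon$ for all $k\in K$, then there is a group embedding $\psi\colon H\to\Gamma_N$ with $\psi_{|K}=\mathrm{id}$ and $|\lambda(\psi(x))-q(x)|<3\varepsilon$ for all $x\in H$. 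The set where the hypothesis fails is closed. The set where the conclusion holds is open, since it is a union over the countably many embeddings $\psi$ of sets defined by finitely many strict inequalities. Hence each $U_h$ is the union of a closed set and an open set, so it is $G_\delta$ in the metrizable space $\mathfrak{M}_r(\Gamma_N)$. It follows that $\mathcal{U}=\bigcap_h U_h$ is $G_\delta$.

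\textbf{Density of each $U_h$.} Fix a basic open set, determined by prescribing $\lambda$ up to $\delta$ on a finite subgroup $A\supseteq K$ via some value $\lambda_0$. Suppose first that $|\lambda_0(k)-q(k)|<\varepsilon$ on $K$; otherwise we need only ensure the hypothesis fails, which is an open condition. I would perturb $\lambda_0$ on $A$ so that it agrees with $q$ on $K$, and this perturbation is the delicate point. The aim is that $(K,q)$ embeds isometrically into both $(A,\lambda_0')$ and $(H,q)$. A workable route is a small mixing such as $\lambda_0'=\max(\lambda_0, q\circ\pi)$ on a suitable complement, or more simply taking the largest value on $A$ dominated by prescribed data.

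Amalgamate $(A,\lambda_0')$ and $(H,q)$ over $K$ using Lemma \ref{lemma: amalgamation} to get a finite $(B,p)\in\mathfrak{G}_r(N)$. Embed $B$ into $\Gamma_N$ over $A$ using the extension property of $\Gamma_N$. Finally extend $p$ from $B$ to all of $\Gamma_N$: write $\Gamma_N=B\oplus C$ and set $\lambda(b+c)=p(b)+\mu(c)$ for an arbitrary value $\mu$ on $C$, truncating by $\min(1,\cdot)$ when $r=1$. The resulting $\lambda$ lies in the basic open set and in the interior of $U_h$. By Baire, $\mathcal{U}$ is dense $G_\delta$.

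\textbf{From $\lambda\in\mathcal{U}$ to $\mathbb{G}_r(N)$.} This is the step I expect to be the main obstacle, because Proposition \ref{prop: characterizing Niemiec groups in terms of approximate extension property} requires $(G,p)$ to be $Q$-valued, whereas a generic $\lambda$ is not. I would instead imitate the Kubi\'{s}--Solecki argument directly. Build an approximate back-and-forth between $(\Gamma_N,\lambda)$ and $Q\mathbb{G}_r(N)$, the latter taken with $Q$ the nonnegative rationals, using the Fra\"{\i}ss\'{e} extension property on the $Q\mathbb{G}_r(N)$ side and the codes $U_h$ on the $\Gamma_N$ side. The result is sequences of $\varepsilon_n$-almost isometric partial embeddings with $\sum\varepsilon_n<\infty$, moving finitely generated subgroups back and forth, whose limit is an isometric isomorphism of the completions.

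To keep the errors summable, a rational-valued approximation of $\lambda$ on a finite subgroup $F$ is needed at each stage. I would pass to the subgroup $F'$ of $\Gamma_N$ generated by $F$ and the new elements, together with a rational value $q$ on it that is $\varepsilon$-close to $\lambda$. This is possible because every value on a finite group can be approximated uniformly by rational values, for instance by rounding the distances up and then taking the induced path value. This approximation step, together with the correction from almost-isometries to genuine identity-on-$K$ maps, is where the care goes.
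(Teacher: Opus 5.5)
\textbf{The proposal has genuine gaps: the density step fails as written, and the final identification with $\mathbb{G}_r(N)$ is never completed.}

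\textbf{Density of $U_h$.} You plan to perturb $\lambda_0$ on $A$ so that it agrees with $q$ on $K$. This cannot be done inside the basic open set $V$. Points of $V$ may differ from $\lambda_0$ by at most $\delta$ on $A$. But $\eta=\max_{k\in K}|\lambda_0(k)-q(k)|$ can be close to $\varepsilon$, while $\delta$ is arbitrary, so no $\lambda_0'$ with $\lambda_0'|_K=q$ lies in $V$ in general.

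The fix is to perturb the target value instead. Set $q'(h)=\min_{k\in K}\bigl(\lambda_0(k)+q(h-k)+\eta\cdot\mathbf{1}_{h\neq k}\bigr)$, truncated by $1$ if $r=1$. This is a value on $H$ with $q'|_K=\lambda_0|_K$ and $|q'-q|\le\eta<\varepsilon$. You can then amalgamate $(A,\lambda_0)$ and $(H,q')$ over $K$ exactly, as the paper does.

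\textbf{Extending the value to $\Gamma_N$.} The decomposition $\Gamma_N=B\oplus C$ does not exist in general. $\Gamma_0$ is divisible, so it has no nonzero finite direct summand at all. In $\Gamma_4$, the subgroup $2\mathbb{Z}/4\mathbb{Z}$ of one coordinate is not a summand either. Use the paper's device instead: keep $p$ on the copy of $B$ and put a constant $M\ge\max\{1,\max p\}$ off it, with $M=1$ when $r=1$.

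\textbf{From $\lambda\in\mathcal U$ to $\mathbb{G}_r(N)$.} The obstacle you identify is not real, and this is the missing idea. Proposition~\ref{prop: characterizing Niemiec groups in terms of approximate extension property} only needs $(\Gamma_N,\lambda)$ to be $Q$-valued for \emph{some} countable admissible $Q$. The set $Q_\lambda=\{a+\sum_j\lambda(x_j)\mid a\in\mathbb{Q}_{\ge0},\ x_j\in\Gamma_N\}$ always qualifies; the paper codes its elements by formal sums $a+X_{x_1}+\ldots+X_{x_m}$.

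With your rational codes you do not even need that coding. Take a finite $Q_\lambda$-valued $(H,q)$, a subgroup $K\le H$ and an isometric embedding $\varphi$.
\begin{enumerate}
\item Round $q$ up to a rational value $q''$ with $q\le q''<q+\eta$, using your path-value construction.
\item Apply your condition to the code $(\varphi(K),H,q'',\eta)$, with $K$ identified with $\varphi(K)$ via $\varphi$. This gives $\psi$ with $\psi|_K=\varphi$ and $|\lambda\circ\psi-q|<4\eta$.
\item Choose $\eta<\tfrac{\varepsilon}{4}\min_{h\neq0}q(h)$. Then $\psi$ is $\varepsilon$-almost isometric, so condition (1) of the proposition holds.
\end{enumerate}
As written, however, you replace this with an approximate back-and-forth against $\mathbb{Q}_{\geq 0}\mathbb{G}_r(N)$ that is only sketched. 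You would still need bridge values on finite groups, nested domains, and control of the non-injective corrections. So the identification of the completion with $\mathbb{G}_r(N)$ is not established.
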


\begin{proof}
We want to encode the approximate extension property of Proposition \ref%
{prop: characterizing Niemiec groups in terms of approximate extension
property} through countably many open dense conditions.

Let $\mathcal{P}$ be the collection of formal expressions 
\begin{equation*}
P=a+X_{x_1}+\ldots+X_{x_m}, \quad a\in\mathbb{Q}_{\geq 0},\, m\in\mathbb
N,\, x_i\in\Gamma_N,
\end{equation*}
where $m=0$ is possible. For a value $\lambda\in\mathfrak{M}_r(\Gamma_N)$,
we define the evaluation 
\begin{equation*}
\lambda(P)=a+\lambda(x_1)+\ldots+\lambda(x_m),
\end{equation*}
and we call $P$ syntactically positive if either $a>0$ or one of the $x_i$
is nonzero. In particular, $\lambda(P)>0$ for every $\lambda\in\mathfrak{M}%
_r(\Gamma_N)$ whenever $P$ is syntactically positive.

We fix the following finite data, corresponding to one instance of the
extension problem required by Proposition \ref{prop: characterizing Niemiec
groups in terms of approximate extension property}. We will show that the
set of metrics satisfying this particular instance is open and dense, and we
will then intersect over the countably many instances required to conclude
the proof.

\begin{enumerate}
\item A finite group $H\in\mathfrak{F}_N$ and $K\leq H$, 

\item a group embedding $\varphi\colon K\to\Gamma_N$, 

\item formal expressions $P_h\in\mathcal{P}$, for $h\in H$, with $P_0=0$ and 
$P_h$ syntactically positive for $h\neq 0$, 

\item a rational $\varepsilon\in(0,1)$. 
\end{enumerate}

For $\lambda\in\mathfrak{M}_r(\Gamma_N)$, define $q_\lambda(h)=\lambda(P_h)$%
, and let $A=A(H,K,\varphi,(P_h)_{h\in H})$ be the set of those $\lambda\in%
\mathfrak{M}_r(\Gamma_N)$ for which $q_\lambda$ is a value on $H$, bounded
by $1$ if $r=1$, and $\varphi\colon (K,q_\lambda)\to(\Gamma_N,\lambda)$ is
isometric. Note that $A$ is closed in $\mathfrak{M}_r(\Gamma_N)$. Indeed, $%
q_\lambda(h)>0$ for every $h\in H\setminus\{0\}$ is guaranteed by our choice
of $P_h$; the conditions $q_\lambda(0)=0$, $q_\lambda(h+h^{\prime })\leq
q_\lambda(h)+q_\lambda(h^{\prime })$, and $q_\lambda(h)=q_\lambda(-h)$ are
closed; and so is the condition $q_\lambda\leq 1$ when $r=1$. Finally, the
condition that $\varphi$ is an isometry is expressed by finitely many
equalities and is therefore closed. Consider now the set $%
B=B(H,K,\varphi,(P_h)_{h\in H},\varepsilon)$ consisting of all those $%
\lambda\in\mathfrak{M}_r(\Gamma_N)$ for which there exists a group embedding 
$\psi\colon H\to\Gamma_N$ such that 
\begin{align}
(1-\varepsilon)q_\lambda(h)&<\lambda(\psi(h))<(1+\varepsilon)q_\lambda(h),%
\quad & &\text{for }h\in H\setminus\{0\},  \label{condition: 2} \\
\lambda(\psi(k)-\varphi(k))&<\varepsilon\quad & &\text{for }k\in K.
\label{condition: 3}
\end{align}
Note that for every group embedding $H\to\Gamma_N$ conditions %
\eqref{condition: 2} and \eqref{condition: 3} are open, so that $B$ is an
open set. It follows that the set 
\begin{equation*}
U=U(H,K,\varphi,(P_h)_{h\in H},\varepsilon)=(\mathfrak{M}_r(\Gamma_N)%
\setminus A)\cup B
\end{equation*}
is open. Intuitively membership in $U$ expresses the implication \enquote{$%
\lambda\in A\implies\lambda\in B$}: either the fixed codes $(P_h)_{h\in H}$
do not express an extension problem from $K$ to $H$ for $\lambda$, so that
the condition is vacuously satisfied, or they do define an extension problem
and $\lambda\in B$ ensures it can be solved within $\varepsilon$. We claim
that $U$ is dense, but before checking its density let us assume it and
conclude the argument.

Assume that $U(H,K,\varphi,(P_h)_{h\in H},\varepsilon)$ is open dense for
every choice of finite data, and let $\mathcal{G}_{r,N}$ be the intersection
of all these sets, ranging over finite $H\in\mathfrak{F}_N$, subgroups $%
K\leq H$, embeddings $\varphi\colon K\to\Gamma_N$, choices of codes $%
(P_h)_{h\in H}$, and rational $\varepsilon\in(0,1)$. Since there are only
countably many embeddings $K\to\Gamma_N$ and the other data also range over
countable sets, $\mathcal{G}_{r,N}$ is a dense $G_\delta$ set. We show that
every $\lambda\in\mathcal{G}_{r,N}$ satisfies the hypotheses of Proposition %
\ref{prop: characterizing Niemiec groups in terms of approximate extension
property}, so that $\widehat{(\Gamma_N,\lambda)}\cong\mathbb{G}_r(N)$.
Define 
\begin{equation*}
Q_\lambda=\left\{a+\sum_{j=1}^m\lambda(x_j)\mid a\in\mathbb{Q}_{\geq 0},\
m\in\mathbb{N},\ x_j\in\Gamma_N \right\}.
\end{equation*}

The set $Q_\lambda$ is a countable admissible subset of $[0,\infty)$
containing $\lambda(\Gamma_N)$. Fix a $Q_\lambda$-valued finite group $(H,q)\in\GG_r(N)$, with $q\leq1$ if $r=1$, a subgroup $K\leq H$, an isometric embedding $%
\varphi\colon K\to\Gamma_N$, and $0<\varepsilon<1$. We need to produce an $%
\varepsilon$-almost isometric group embedding $\psi\colon(H,q)\to(\Gamma_N,%
\lambda)$ such that $\|\psi_{|_K}-\varphi\|_\infty\leq\varepsilon$. For each 
$h\in H\setminus\{0\}$, choose a code $P_h$ with $\lambda(P_h)=q(h)$; this
code is necessarily syntactically positive since $q(h)>0$. Let $P_0=0$. By
our choice of codes, $\lambda\in A(H,K,\varphi,(P_h)_{h\in H})$. Choose a
rational $0<\varepsilon^{\prime }<\varepsilon$. Since $\lambda\in\mathcal{G}%
_{r,N}\subseteq U(H,K,\varphi,(P_h)_{h\in H},\varepsilon^{\prime })$, it
follows that $\lambda\in B(H,K,\varphi,(P_h)_{h\in H},\varepsilon^{\prime })$%
. Thus there is an $\varepsilon^{\prime }$-almost isometric embedding $\psi$
with $\|\psi_{|_K}-\varphi\|_\infty\leq\varepsilon^{\prime }$, which gives
the approximate extension property required by Proposition \ref{prop:
characterizing Niemiec groups in terms of approximate extension property}.
Since $\Gamma_N$ is locally finite and hence torsion, the second condition
of that proposition is also satisfied. Moreover $(\Gamma_N,\lambda)$ is of class $\mathcal O_0$, being torsion, and we have $(\Gamma_N,\lambda)\in\GG_r(N)$ since $\Gamma_N$ has exponent dividing $N$ and value bounded, if $r=1$, by construction. Therefore $\widehat{(\Gamma_N,\lambda)%
}\cong\mathbb{G}_r(N)$. In particular, $\mathcal{G}_{r,N}\subseteq\mathcal{F}%
_{r,N}$, showing that the latter contains a comeager set in $\mathfrak{M}%
_r(\Gamma_N)$.

It remains to show that, for every choice of finite data, the corresponding
set $$U=U(H,K,\varphi,(P_h)_{h\in H},\varepsilon)$$ is dense. Let $\lambda_0\in%
\mathfrak{M}_r(\Gamma_N)$, and let $V$ be a basic open neighbourhood of $%
\lambda_0$. Thus, for some finite $F\subseteq\Gamma_N$ and some $\delta>0$,
we may write 
\begin{equation*}
V=\left\{\lambda\in\mathfrak{M}_r(\Gamma_N)\mid
\lvert\lambda(f)-\lambda_0(f)\rvert<\delta\text{ for every }f\in F\right\}.
\end{equation*}
If $V\cap(\mathfrak{M}_r(\Gamma_N)\setminus A)\neq\varnothing$, then $V\cap
U\neq\varnothing$, so assume that $V\subseteq A$. Write $q_0=q_{\lambda_0}$.
Since $\lambda_0\in A$, the function $q_0$ is a value on $H$, bounded by $1$
if $r=1$, and $\varphi\colon(K,q_0)\to(\Gamma_N,\lambda_0)$ is an isometric
embedding.

Let $C$ be the subgroup of $\Gamma_N$ generated by $F$, $\varphi(K)$, and
all the elements of $\Gamma_N$ appearing in the expressions $(P_h)_{h\in H}$%
. This subgroup is finite because $\Gamma_N$ is locally finite. Apply Lemma %
\ref{lemma: amalgamation} to $(H,q_0)$ and $(C,\lambda_0)$ over $K$,
identifying $K$ with $\varphi(K)$. We obtain a finite valued group $(D,s)\in%
\mathfrak{G}_r(N)$ and isometric embeddings 
\begin{equation*}
i_H\colon(H,q_0)\to(D,s),\qquad i_C\colon(C,\lambda_0)\to(D,s)
\end{equation*}
such that $i_H\circ\iota_K=i_C\circ\varphi$, where $\iota_K\colon K\to H$ is
the inclusion. Since $D$ is finite and, when $N\geq2$, of exponent dividing $%
N$ because $(D,s)\in\mathfrak{G}_r(N)$, we have $D\in\mathfrak{F}_N$. By the
extension property of the Fra\"{\i}ss\'{e} limit $\Gamma_N$, the embedding $%
i_C(c)\mapsto c$ of $i_C(C)$ into $\Gamma_N$ extends to an embedding $%
j\colon D\to\Gamma_N$. Hence $j\circ i_C=\mathrm{id}_C$.

Define a value $s_j$ on $j(D)$ by $s_j(j(d))=s(d)$. It extends $\lambda_0|_C$%
. If $r=1$, set $M=1$; otherwise, choose 
\begin{equation*}
M\geq\max\{1,s(d)\mid d\in D\}.
\end{equation*}
Define $\lambda\colon\Gamma_N\to[0,\infty)$ by 
\begin{equation*}
\lambda(x)=%
\begin{cases}
s_j(x), & x\in j(D), \\ 
M, & x\notin j(D).%
\end{cases}%
\end{equation*}
This is a value on $\Gamma_N$, bounded by $1$ when $r=1$. Indeed,
definiteness and symmetry are immediate. For subadditivity, the case $x,y\in
j(D)$ follows from the subadditivity of $s_j$. If exactly one of $x,y$ lies
outside $j(D)$, then $x+y\notin j(D)$ and the inequality is immediate. If
both lie outside $j(D)$, then either $x+y\notin j(D)$, in which case $M\leq2M
$, or $x+y\in j(D)$, in which case $s_j(x+y)\leq M\leq2M$. When $r=1$, the
inequality $s_j\leq1=M$ follows because $(D,s)\in\mathfrak{G}_1(N)$.

The value $\lambda$ agrees with $\lambda_0$ on $C$, so $\lambda\in V$. Set $%
\psi=j\circ i_H$. Then $\psi|_K=\varphi$. Moreover, since every element
appearing in each code $P_h$ belongs to $C$, 
\begin{equation*}
\lambda(P_h)=\lambda_0(P_h)=q_0(h)=s(i_H(h))=\lambda(\psi(h))
\end{equation*}
for every $h\in H$. Thus conditions \eqref{condition: 2} and %
\eqref{condition: 3} hold exactly, and $$\lambda\in B(H,K,\varphi,(P_h)_{h\in
H},\varepsilon)\cap V.$$ This proves that $U$ is dense.
\end{proof}

\begin{remark}
    Doucha proved \cite[Theorem 0.2]{doucha2019} that whenever $G$ is a countable Abelian group with $G\cong\bigoplus_{n\in\mathbb N}G$, there exists a generic metric completion of $G$. He asked whether there exists a single generic Abelian group, that is a single group $\mathbb H$ such that whenever $G_1$ and $G_2$ are countable Abelian groups isomorphic to their countable direct sum, the generic completions of both $G_1$ and $G_2$ are isomorphic to $\mathbb H$. He observed that if $G_1$ and $G_2$ are torsion groups with bounded torsion but different bounds on the torsion, then they cannot have the same generic completion, but stated as a reasonable expectation the fact that the groups $\G_\infty(N)$ are the only generic Abelian groups \cite[Remark 3.16]{doucha2019}. Theorem \ref{thm: generic metric completion is isomorphic to Niemiec's universal group} confirms this expectation for $G=\Gamma_N$, though the general statement remains open.
\end{remark}

Before obtaining the extreme amenability of $\G_1(N)$ for all $N$, we need a version of Theorem %
\ref{thm: criterion for extreme amenability} for $\mathfrak{M}_1(G)$ instead
of $\mathfrak{M}_\infty(G)$. This is an immediate consequence of the
unbounded result of Melleray and Tsankov, but we state it explicitly for
completeness.

\begin{theorem}
\label{thm: generic bounded completion is extremely amenable}  Let $G$ be a
countable Abelian group. If either $G$ is unbounded or $G$ is bounded and satisfies %
\eqref{eq: exponent condition}, the set  
\begin{equation*}
\mathfrak{E}=\left\{\lambda\in\mathfrak{M}_1(G)\mid \widehat{(G,\lambda)}%
\text{ is extremely amenable}\right\},
\end{equation*}
is a dense $G_\delta$ subset of $\mathfrak{M}_1(G)$.
\end{theorem}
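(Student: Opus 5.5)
The plan is to deduce the statement from Theorem \ref{thm: criterion for extreme amenability} by comparing $\mathfrak{M}_\infty(G)$ with $\mathfrak{M}_1(G)$ through truncation. Let $\tau\colon\mathfrak{M}_\infty(G)\to\mathfrak{M}_1(G)$ be $\tau(\lambda)=\min(\lambda,1)$. The truncation of a value is again a value: definiteness and symmetry are clear, and $\min(\lambda(x+y),1)\le\min(\lambda(x),1)+\min(\lambda(y),1)$ by a two-case check. Moreover $\tau$ is continuous for the product topologies, and it restricts to the identity on $\mathfrak{M}_1(G)$, so it is a retraction.

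The key observation is that $\lambda$ and $\tau(\lambda)$ induce the same uniformity and the same group topology on $G$, since they agree on all pairs at distance less than $1$. Hence $(G,\lambda)$ and $(G,\tau(\lambda))$ have the same completion as topological groups. Extreme amenability is a property of the topological group, and a topological group is extremely amenable iff its completion is. Therefore
\begin{equation*}
\mathfrak{E}=\mathfrak{M}_1(G)\cap \mathfrak{E}_\infty=\tau(\mathfrak{E}_\infty),\qquad \mathfrak{E}_\infty=\tau^{-1}(\mathfrak{E}),
\end{equation*}
where $\mathfrak{E}_\infty$ is the set of values in $\mathfrak{M}_\infty(G)$ with extremely amenable completion. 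Theorem \ref{thm: criterion for extreme amenability} says $\mathfrak{E}_\infty$ is dense $G_\delta$ in $\mathfrak{M}_\infty(G)$.

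For the $G_\delta$ part, note that $\mathfrak{M}_1(G)$ is a subspace of $\mathfrak{M}_\infty(G)$ with the subspace topology. So $\mathfrak{E}=\mathfrak{E}_\infty\cap\mathfrak{M}_1(G)$ is $G_\delta$ in $\mathfrak{M}_1(G)$.

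Density is the step needing care, since $\mathfrak{M}_1(G)$ is closed and nowhere dense in $\mathfrak{M}_\infty(G)$. So density of $\mathfrak{E}_\infty$ does not directly restrict. I use the retraction instead.
\begin{itemize}
\item Let $W$ be a nonempty open subset of $\mathfrak{M}_1(G)$.
\item Then $\tau^{-1}(W)$ is open in $\mathfrak{M}_\infty(G)$ and contains $W$, so it is nonempty.
\item By density, $\tau^{-1}(W)$ contains some $\lambda\in\mathfrak{E}_\infty$.
\item Then $\tau(\lambda)\in W$, and $\tau(\lambda)\in\mathfrak{E}$ by the invariance above.
\end{itemize}
Hence $\mathfrak{E}$ is dense.

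The only real subtlety is justifying that extreme amenability of the completion depends only on the topological group (or uniform) structure. This holds because the two completions are isomorphic as topological groups, the identity on $G$ being uniformly bicontinuous between the two metrics.
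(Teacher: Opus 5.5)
Your proof is correct and follows the paper's argument. The $G_\delta$ part intersects the Melleray--Tsankov set with the closed subspace $\mathfrak{M}_1(G)$, and density comes from truncating a nearby value with extremely amenable completion, since truncation preserves the uniformity. Your use of the continuous retraction $\tau$ is an abstract phrasing of the paper's explicit estimate $\lvert\min\{1,\mu(f)\}-\lambda_0(f)\rvert\le\lvert\mu(f)-\lambda_0(f)\rvert$.
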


\begin{proof}
The set $\mathfrak{E}$ is $G_\delta$, since it is the intersection of the $%
G_\delta$ subset of $\mathfrak{M}_\infty(G)$ given by Theorem \ref{thm:
criterion for extreme amenability} with the closed subspace $\mathfrak{M}%
_1(G)$. To prove density, fix $\lambda_0\in\mathfrak{M}_1(G)$, a finite set $%
F\subseteq G$, and $\varepsilon>0$. By Theorem \ref{thm: criterion for
extreme amenability}, there is a value $\mu\in\mathfrak{M}_\infty(G)$ such
that $\lvert\mu(f)-\lambda_0(f)\rvert<\varepsilon$ for every $f\in F$ and $%
\widehat{(G,\mu)}$ is extremely amenable. Let $\mu^{\prime
}(g)=\min\{1,\mu(g)\}$ for every $g\in G$. This truncation is a value
bounded by $1$, so $\mu^{\prime }\in\mathfrak{M}_1(G)$. Moreover, since $%
\lambda_0\leq1$,  
\begin{equation*}
\lvert\mu^{\prime
}(f)-\lambda_0(f)\rvert\leq\lvert\mu(f)-\lambda_0(f)\rvert\quad\text{for
every }f\in F.
\end{equation*}
Since $\mu$ and $\mu^{\prime }$ induce the same uniformity on $G$, their
completions are isomorphic as topological groups. Hence $\widehat{%
(G,\mu^{\prime })}$ is extremely amenable, proving that $\mu^{\prime }\in%
\mathfrak{E}$ and that $\mathfrak{E}$ is dense.
\end{proof}

We obtain as a corollary a proof of the extreme amenability of $\G_r(N)$.

\begin{theorem}
\label{thm: extreme amenability of G_r(N)}  For every $r\in\{1,\infty\}$ and
every $N\in\{0,2,3,\ldots\}$, the group $\mathbb{G}_r(N)$ is extremely
amenable.
\end{theorem}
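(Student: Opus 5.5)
The plan is to combine the genericity statement of Theorem \ref{thm: generic metric completion is isomorphic to Niemiec's universal group} with the generic extreme amenability results, using the Baire category theorem in the Polish space $\mathfrak{M}_r(\Gamma_N)$. Fix $r\in\{1,\infty\}$ and $N\in\{0,2,3,\ldots\}$. By Lemma \ref{lemma: hypothesis of criterion for extreme amenability are satisfied}, $\Gamma_N$ satisfies the hypotheses of Theorem \ref{thm: criterion for extreme amenability}: it is unbounded when $N=0$, and bounded and satisfying \eqref{eq: exponent condition} when $N\geq 2$.

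When $r=\infty$, Theorem \ref{thm: criterion for extreme amenability} gives a dense $G_\delta$ set $\mathfrak{E}_\infty\subseteq\mathfrak{M}_\infty(\Gamma_N)$ of values whose completion is extremely amenable. Here we use that the Polish space of invariant metrics on $\Gamma_N$ is exactly $\mathfrak{M}_\infty(\Gamma_N)$, via the identification of a value with its invariant metric. When $r=1$, Theorem \ref{thm: generic bounded completion is extremely amenable} gives the analogous dense $G_\delta$ set $\mathfrak{E}\subseteq\mathfrak{M}_1(\Gamma_N)$.

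In either case, Theorem \ref{thm: generic metric completion is isomorphic to Niemiec's universal group} says that $\mathcal{F}_{r,N}$ is comeager in $\mathfrak{M}_r(\Gamma_N)$. Since $\mathfrak{M}_r(\Gamma_N)$ is Polish, hence Baire, the intersection of $\mathcal{F}_{r,N}$ with the relevant extreme amenability set is comeager, and in particular nonempty. So we can pick $\lambda$ in this intersection. Then $\widehat{(\Gamma_N,\lambda)}$ is extremely amenable and isometrically group-isomorphic to $\mathbb{G}_r(N)$. Extreme amenability is invariant under isomorphism of topological groups, so $\mathbb{G}_r(N)$ is extremely amenable.

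No step is a real obstacle, since all the substantive work lies in the earlier theorems. The only point to check is that the topology on the space of invariant metrics used by Melleray and Tsankov agrees with the pointwise-convergence topology on $\mathfrak{M}_\infty(\Gamma_N)$ used here; both are the topology of pointwise convergence on $\Gamma_N$, so this holds.
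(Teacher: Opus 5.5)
Your proposal is correct and follows the paper's proof essentially verbatim: you intersect the comeager set $\mathcal{F}_{r,N}$ with the comeager set of values having extremely amenable completion (from Theorem \ref{thm: criterion for extreme amenability} for $r=\infty$, Theorem \ref{thm: generic bounded completion is extremely amenable} for $r=1$, via Lemma \ref{lemma: hypothesis of criterion for extreme amenability are satisfied}), then pick a value in the intersection. Your added remark that the Melleray--Tsankov topology coincides with the pointwise-convergence topology on $\mathfrak{M}_\infty(\Gamma_N)$ is a harmless extra check that the paper leaves implicit.
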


\begin{proof}
By Theorem \ref{thm: generic metric completion is isomorphic to Niemiec's
universal group}, there is a comeager set of values in $\mathfrak{M}%
_r(\Gamma_N)$ whose completions are isomorphic to $\mathbb{G}_r(N)$. By
Lemma \ref{lemma: hypothesis of criterion for extreme amenability are
satisfied}, the group $\Gamma_N$ satisfies the hypotheses of Theorem \ref%
{thm: criterion for extreme amenability} when $r=\infty$ and of Theorem \ref%
{thm: generic bounded completion is extremely amenable} when $r=1$. Thus
there is also a comeager set of values in $\mathfrak{M}_r(\Gamma_N)$ whose
completions are extremely amenable. These two comeager sets intersect, so $%
\mathbb{G}_r(N)$ is extremely amenable.
\end{proof}

\section{\texorpdfstring{$\G_r(N)$ is a strongly exotic L\'{e}vy group}{Gr(N) is a strongly exotic Levy group}}
\label{section: G_r(N) is strongly exotic and Levy}

In this section, we show that the groups $\G_r(N)$ are strongly exotic and L\'{e}vy. The notion of a L\'{e}vy sequence was first defined and linked to extreme amenability by Gromov and Milman \cite{gromov1983topological}; we follow the presentation from \cite{pestov2017amenability}.
Let $(X,d,\mu)$ be a metric space equipped with a Borel probability measure $\mu$. The \emph{concentration function} of $(X,d,\mu)$, $\alpha_{(X,d,\mu)}\colon[0,\infty)\to [0,1/2]$ is defined by $$\alpha_{(X,d,\mu)}(\varepsilon)=\begin{cases}1/2 & \varepsilon=0\\
1-\inf\{\mu(B_d(A,\varepsilon))\mid A\subseteq X\text{ Borel}, \mu(A)\geq 1/2\} & \text{otherwise,}
    
\end{cases}$$
where $B_d(A,\varepsilon)$ is the $\varepsilon$-neighbourhood of $A$ with respect to $d$. A sequence $(X_i,d_i,\mu_i)$ of metric spaces with Borel probability measures is said to be a \emph{L\'{e}vy sequence} if every sequence of Borel subsets $A_i\subseteq X_i$ with $\mu_i(A_i)\geq 1/2$ satisfies $\lim_{i\to\infty}\mu_i(B_{d_i}(A_i,\varepsilon))=1$ for every $\varepsilon>0$. Equivalently a sequence $(X_i,d_i,\mu_i)$ is a L\'{e}vy sequence if for every $\varepsilon>0$, $\lim_{i\to\infty}\alpha_{(X_i,d_i,\mu_i)}(\varepsilon)=0$ \cite{gromov1983topological}.

The following example of a L\'{e}vy sequence  will be important in the  proof of Theorem \ref{thm: Niemiec groups are Levy}.

\begin{example}[\protect{\cite[Example 3.3]{pestov2017amenability}}] \label{example: discrete spaces form a  Levy family}
    Let $(X,\mu)$ be a discrete probability space and consider the sequence $(X^n,\delta_n,\mu^{\otimes n})$, where $\mu^{\otimes n}$ is the product measure and $\delta_n$ is the normalized Hamming distance $$\delta_n((x_1,\ldots,x_n),(x_1',\ldots,x_n'))=\frac1n\left|\left\{ i\mid x_i\neq x_i'\right\}\right|.$$  Then, for every $\varepsilon>0$, we have $\alpha_{(X^n,\delta_n,\mu^{\otimes n})}(\varepsilon)\leq 2\exp(-\varepsilon^2n)$ \cite[Proposition 2.1.1]{talagrand1995concentration}, in particular the sequence $(X^n,\delta_n,\mu^{\otimes n})$ is a L\'{e}vy sequence.
\end{example}

A Polish group $(G,d)$ with a compatible left-invariant metric $d$ is called:
\begin{itemize}
    \item A \emph{L\'{e}vy group}, if it has an increasing sequence of compact subgroups $(K_i)_{i\in\mathbb N}\subseteq G$ such that $\bigcup_i K_i$ is dense in $G$ and $(K_i,d_{|K_i},\mu_i)$ is a L\'{e}vy sequence, where $\mu_i$ is the normalized Haar measure on $K_i$. Since all metric are uniformly equivalent on a compact space, this definition is independent of the choice of $d$.
    \item \emph{Whirly amenable} \cite{pestov2017amenability}, if it is amenable and whenever $G$ acts continuously on a compact Hausdorff space $X$, any invariant regular Borel probability measure $\mu$ on $X$ is supported on the set of fixed points for the action.
    \item \emph{Exotic}  \cite{herer1975existence}, if every strongly continuous unitary representation of $G$ on a Hilbert space is trivial, that is if every continuous group homomorphism $G\to U(H)$ to the unitary group of a Hilbert space, equipped with the strong operator topology, is constant.
    \item \emph{Strongly exotic} \cite{banaszczyk1983existence}, if every weakly continuous representation of $G$ on a Hilbert space is trivial, that is if every continuous group homomorphism $G\to \mathrm{GL}(H)$ to the group of bounded, invertible, linear operators on a Hilbert space, equipped with the weak operator topology, is constant.
\end{itemize}

Every L\'{e}vy group is whirly amenable \cite[Remark 1.4]{glasner2005automorphism} and every whirly amenable group is extremely amenable (if $G\curvearrowright X$ is a continuous action of a whirly amenable group on a nonempty compact space, there exists an invariant probability measure with nonempty support because $G$ is amenable, and it is supported on the set of fixed points since $G$ is whirly amenable. In particular the set of fixed points is nonempty and $G$ is extremely amenable). Moreover every strongly exotic group is exotic and every amenable exotic group is whirly amenable \cite[Remark 5.7]{schneider2025groups}. Note that the properties of being L\'{e}vy and of being strongly exotic are incomparable: $U(\ell^2)$, the unitary group of the separable Hilbert space, is L\'{e}vy \cite{gromov1983topological} but not strongly exotic, while for a pathological submeasure $\phi$, the group $L^0(\phi,\mathbb R)$ is strongly exotic \cite[Theorem 5.5]{schneider2025groups} but not L\'{e}vy, since it is a real topological vector space so it contains no nontrivial compact subgroups. In addition being L\'{e}vy is strictly stronger than being whirly amenable, by the properties of $L^0(\phi,\mathbb R)$ already mentioned, and being whirly amenable is strictly stronger than being extremely amenable: for example $\mathrm{Aut}(\mathbb Q,\leq)$, the group of order-preserving bijections of $\mathbb Q$, equipped with the pointwise convergence topology, is extremely amenable \cite{pestov1998free} but not whirly \cite[Remark 1.3]{glasner2005automorphism}.

\begin{theorem}\label{thm: Niemiec groups are Levy}
    For every $r\in\{1,\infty\}$ and every $N\in\{0,2,3,\ldots\}$, the group $\G_r(N)$ is a L\'{e}vy group.
\end{theorem}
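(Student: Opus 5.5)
We will build an increasing sequence of finite subgroups $K_1\leq K_2\leq\cdots$ of $\G_r(N)$ with dense union, each isometrically isomorphic to a power of a fixed finite valued group under a value which, after rescaling, behaves like a normalized Hamming distance. Talagrand's inequality (Example \ref{example: discrete spaces form a  Levy family}) will then give concentration.

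\textbf{Step 1: a dense increasing sequence of finite subgroups.} Start from a dense sequence $(g_k)$ of finite-order elements of $\G_r(N)$. By property (4) of Theorem \ref{thm: Niemiec characterization} these are dense when $N=0$. When $N\geq 2$ every element has order dividing $N$. Set $F_k=\langle g_1,\ldots,g_k\rangle$, which is finite; its values are finitely many numbers bounded by $D_k=\max p(F_k)$.

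\textbf{Step 2: the Hamming-type enlargement.} Given a finite subgroup $F\leq \G_r(N)$ and $n\in\mathbb N$, form $H=F^n$ with the value
\[
q(x_1,\ldots,x_n)=\max\Bigl\{p\Bigl(\sum_i x_i\Bigr),\ \tfrac{c}{n}\,\lvert\{i\mid x_i\neq 0\}\rvert\Bigr\}
\]
and a small constant $c$, truncating at $1$ if $r=1$. The max of a semivalue and a value is a value, and $H\in\GG_r(N)$. Embed $F$ diagonally into $H$ as the first coordinate summand, or better as the subgroup $\{(x,0,\ldots,0)\}$. The restriction of $q$ there is $\max\{p(x),c/n\}$, which is not $p$ unless $c/n\leq \min p(F\setminus 0)$. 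Choose $n$ large enough that this holds, so the embedding is isometric.

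Apply Theorem \ref{thm: extension property for G_r(N)} with $K=F$ and $\varphi$ the inclusion. This yields an isometric embedding $\Phi\colon H\to\G_r(N)$ fixing $F$. Then $K'=\Phi(H)\supseteq F$ is isometric to $(F^n,q)$. Each element of $K'$ lies within distance $c$ of the coset of $\{(x,0,\ldots,0)\}$ given by the sum map. On $F^n$ the value satisfies $q\leq D+c\,\delta_n$ in the appropriate sense: $q(x-y)\leq p(\sum(x_i-y_i))+c\,\delta_n(x,y)$, and the first term is not small.

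\textbf{Main obstacle: making the metric uniformly controlled by Hamming distance.} This is the issue just noted. To fix it, I would replace $q$ by
\[
q'(x)=\min_{\sigma}\Bigl(\sum_{i} p(x_i)/n' \Bigr)\ \text{-type expressions,}
\]
or more concretely take $H=F^n$ with $q(x)=\frac1n\sum_i p(x_i)$ and embed $F$ diagonally, $x\mapsto(x,\ldots,x)$. This is isometric since $q(x,\ldots,x)=p(x)$. For Hamming control we then get $q(x-y)\leq D\,\delta_n(x,y)$.

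By Talagrand's inequality for the uniform (Haar) measure on $F^n=X^n$, any $A$ of measure $\geq1/2$ has $\delta_n$-neighbourhood of radius $\varepsilon/D$ of measure $\geq 1-2\exp(-\varepsilon^2n/D^2)$. This neighbourhood sits inside the $q$-neighbourhood of radius $\varepsilon$.

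Iterate as follows. Let $K_1=F_1$. Given $K_i$, first enlarge to $K_i'=\langle K_i,g_{i+1}\rangle$, whose value bound is $D_i'$. Then apply the diagonal construction with $n_i$ chosen so that $2\exp(-n_i/(iD_i')^2)<1/i$, and let $K_{i+1}=\Phi(H)$.

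The Lévy property is verified at the level of $K_{i+1}$ for radius $1/i$. For fixed $\varepsilon$ and all $i\geq 1/\varepsilon$, we get $\alpha_{K_{i+1}}(\varepsilon)\leq 1/i\to0$. The remaining care is to verify that $(F^n,q)$ is a legitimate member of $\GG_r(N)$, which holds because it is torsion of exponent dividing $N$ and truncation is harmless. We also check that $g_{i+1}\in K_{i+1}$, which holds because $\Phi$ extends the inclusion on $K_i'$. Together these give an increasing sequence with dense union and concentrating Haar measures.
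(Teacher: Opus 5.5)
Your final construction is exactly the paper's argument: you adjoin $g_{i+1}$, embed $\langle K_i,g_{i+1}\rangle$ diagonally and isometrically into its $n$-th power with the averaged value, and extend via Theorem \ref{thm: extension property for G_r(N)}. The bound $q(x-y)\leq D\,\delta_n(x,y)$ then transfers Talagrand's Hamming-cube concentration to the image. The only cosmetic difference is the choice of $n_i$: the paper takes $m_n/D_n^2\geq n$, which gives $\alpha\leq 2\exp(-\varepsilon^2 n)$ for all $\varepsilon$ at once. Two small points: the max-with-Hamming value in your Step 2 can simply be dropped, and you should set $D=\max\{1,\diam_p\}$ as the paper does, to avoid dividing by $D=0$ when the subgroup is trivial.
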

\begin{proof}
    Let $p$ be the value on $\G_r(N)$ and let $(a_n)_{n\in\mathbb N}$ be a dense sequence of finite-order elements of $\G_r(N)$ (when $N=0$ there exists such a sequence thanks to condition (4) in Theorem \ref{thm: Niemiec characterization}). We want to inductively construct a sequence $(H_n)_n$ of compact (in fact they will be finite) subgroups of $\G_r(N)$ that, equipped with the normalized Haar measure, form a L\'{e}vy sequence. For the base case let $H_0=\{0\}$. Suppose now that $H_{n-1}$ has been defined and let $A_n=\langle H_{n-1},a_n\rangle$. Let $D_n=\max\{1,\diam_p(A_n)\}$ and choose a sufficiently big positive integer $m_n$ with $m_n/D_n^2\geq n$. Let $B_n=A_n^{m_n}$ and equip it with the averaged value $$q_n(b_1,\ldots,b_{m_n})=\frac1{m_n}\sum_{i=1}^{m_n}p(b_i).$$ Note that $B_n$ is of class $\mathcal O_0$, has exponent dividing $N$ if $N\geq 2$, and satisfies $q_n\leq 1$ if $r=1$, that is $B_n\in \GG_r(N)$. Moreover the diagonal embedding $\Delta\colon A_n\to B_n$, $\Delta(a)=(a,\ldots,a)$ is an isometry, so that, by Theorem \ref{thm: extension property for G_r(N)}, there is an isometric embedding $\psi_n\colon B_n\to\G_r(N)$ with $(\psi_n\circ\Delta)(a)=a$ for every $a\in A_n$. Set $H_n=\psi_n(B_n)$ and note that the sequence $(H_n)_{n\in\mathbb N}$ is increasing with dense union, since $a_n\in H_n$. It remains to prove that the sequence $(H_n,p_{|H_n},\mu_n)$, where $\mu_n$ is the normalized Haar measure on $H_n$, is a L\'{e}vy sequence.  Note that the normalized Haar measure $\mu_n$ on $H_n$ is simply the normalized counting measure $\mu_n(A)=|A|/|H_n|$, for $A\subseteq H_n$, since $H_n$ is finite. Moreover, if $\nu_n$ denotes the normalized counting measure on $A_n$, we have that, since $\psi_n$ is a bijection from $B_n$ to $H_n$, $(\psi_n)_\ast(\nu_n^{\otimes m_n})=\mu_n$. 
    
    Let $\delta_n$ be the normalized Hamming distance on $B_n$, that is $$\delta_n((x_1,\ldots,x_{m_n}),(y_1,\ldots,y_{m_n}))=\frac{|\{i\leq m_n\mid x_i\neq y_i\}|}{m_n}.$$ If $x=(x_i)_i,y=(y_i)_i\in B_n$ we have $x_i-y_i\in A_n$ for every $i$, so that $p(x_i-y_i)\leq\diam_p(A_n)\leq D_n$, and we obtain $$p(\psi_n(x)-\psi_n(y))=q_n(x-y)=\frac1{m_n}\sum_{i=1}^{m_n}p(x_i-y_i)\leq \frac{D_n}{m_n}\left|\{j\mid x_j\neq y_j\}\right|\leq D_n\delta_n(x,y).$$

    Let $E_n\subseteq H_n$ with $\mu_n(E_n)\geq 1/2$ and let $\widetilde{E}_n=\psi_n^{-1}(E_n)$. By the previous estimate we have $$B_{\delta_n}(\widetilde{E}_n,\varepsilon/D_n)\subseteq\psi_n^{-1}(B_p(E_n,\varepsilon)),$$ for any $\varepsilon>0$. Since $\nu_n^{\otimes m_n}(\widetilde{E}_n)=\mu_n(E_n)$, it follows that 
    $$
        1-\mu_n(B_p(E_n,\varepsilon))\leq 1-\nu_n^{\otimes m_n}(B_{\delta_n}(\widetilde{E}_n,\varepsilon/D_n))\leq\alpha_{(B_n,\delta_n,\nu_n^{\otimes m_n})}(\varepsilon/D_n).
    $$

    Taking the supremum over all the sets $E_n\subseteq H_n$ with $\mu_n(E_n)\geq 1/2$ and applying the estimate from Example \ref{example: discrete spaces form a  Levy family} we obtain $$\alpha_{(H_n,p,\mu_n)}(\varepsilon)\leq\alpha_{(B_n,\delta_n,\nu_n^{\otimes m_n})}(\varepsilon/D_n)\leq 2\exp\left(-\frac{\varepsilon^2m_n}{D_n^2}\right)\leq 2\exp(-\varepsilon^2n),$$ where the last inequality also uses that $m_n/D_n^2\geq n$ by our choice of $m_n$. Since $2\exp(-\varepsilon^2n)\to 0$ as $n\to\infty$ we have that $(H_n,p_{|H_n},\mu_n)$ is a L\'{e}vy sequence. In particular $\G_r(N)$ is a L\'{e}vy group, as desired.
\end{proof}

As a corollary, we have a second proof of the extreme amenability of $\G_r(N)$.

\begin{corollary}\label{cor: extreme amenable through Levy}
    Since $\G_r(N)$ is L\'{e}vy, it is also whirly amenable hence extremely amenable. 
\end{corollary}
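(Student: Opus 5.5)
The corollary follows by chaining implications already recorded in this section, so the plan is short. Theorem \ref{thm: Niemiec groups are Levy} shows that $\G_r(N)$ is L\'{e}vy for every $r\in\{1,\infty\}$ and $N\in\{0,2,3,\ldots\}$. By \cite[Remark 1.4]{glasner2005automorphism}, every L\'{e}vy group is whirly amenable. It therefore remains only to spell out why whirly amenability implies extreme amenability, following the parenthetical argument given before Theorem \ref{thm: Niemiec groups are Levy}.

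Let $\G_r(N)\curvearrowright X$ be a continuous action on a nonempty compact Hausdorff space. Whirly amenable groups are amenable by definition, so there is an invariant regular Borel probability measure $\mu$ on $X$. Since $\mu(X)=1$, the support of $\mu$ is nonempty. Whirly amenability says that $\mu$ is concentrated on the closed set $\mathrm{Fix}(X)$ of fixed points, so $\mu(\mathrm{Fix}(X))=1$. Hence $\mathrm{Fix}(X)\neq\varnothing$, and the action has a fixed point. Since the action was arbitrary, $\G_r(N)$ is extremely amenable.

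If one wants to avoid citing \cite{glasner2005automorphism}, there is a direct route from the L\'{e}vy property, using the standard Gromov--Milman argument. Take the finite subgroups $H_n$ from Theorem \ref{thm: Niemiec groups are Levy}, a finite open cover of $X$, and finitely many group elements. Then push forward $\mu_n$ along orbit maps $h\mapsto hx$. Concentration of measure shows that some $x$ has all its translates by a large portion of $H_n$ close together. Density of $\bigcup_n H_n$ and a compactness argument then produce a fixed point. The only delicate point, and the main obstacle, is the uniform continuity estimate needed to transfer concentration from $H_n$ to orbits. The plan is simply to quote the cited remark, so that no new work is required.
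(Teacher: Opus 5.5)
Your proposal is correct and follows the paper's own reasoning. The paper obtains the L\'{e}vy property from Theorem~\ref{thm: Niemiec groups are Levy} and whirly amenability from \cite[Remark 1.4]{glasner2005automorphism}, then deduces extreme amenability exactly as you do: an invariant measure exists by amenability, its support is nonempty, and that support lies in the fixed-point set. Your optional Gromov--Milman sketch is not needed.
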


In order to prove that $\G_r(N)$ is strongly exotic we will leverage the existence of a strongly exotic group with some specific properties, which we recall in the following. The exact construction of this group is outside the scope of this paper and we refer the reader to \cite{schneider2025groups} for the details.

\begin{fact}\label{fact: properties of G_N}
    Let $\phi$ be a non-zero pathological submeasure on a countable Boolean algebra, which exists by \cite{herer1975existence} and \cite[Corollary A.9]{schneider2025groups} and consider, for $N\geq 2$, the group $G_N=L^0(\phi,\mathbb Z/N\mathbb Z)$ \cite[Definition 2.2]{schneider2025groups}. This group has the following properties:
    \begin{enumerate}
        \item The group $G_N$ is completely metrizable and separable. It is complete being constructed as a Raikov completion \cite[Definition 2.2]{schneider2025groups}, it is metrizable because $S(\phi,\mathbb Z/N\mathbb Z)$ is pseudometrizable \cite[Remark 2.6]{schneider2025groups} and $G_N$ is the completion of its Hausdorff quotient and separable because it is built from a countable Boolean algebra.
        \item The constant functions form a closed copy of $\mathbb Z/N\mathbb Z$ in $G_N$, there is such a copy by \cite[Remark 2.5]{schneider2025groups}, it is closed being compact.
        \item $G_N$ is strongly exotic \cite[Theorem 5.5]{schneider2025groups}.
        \item The group $G_N$ is of class $\mathcal O_0$, since $Ng=0$ for every $g\in G_N$.
    \end{enumerate}
\end{fact}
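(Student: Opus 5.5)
The plan is to verify the four items one at a time, reducing each to the construction of $L^0(\phi,\mathbb Z/N\mathbb Z)$ in \cite{schneider2025groups}. Recall that construction. One starts from the group $S(\phi,\mathbb Z/N\mathbb Z)$ of simple (finitely valued) functions from the Stone space of the countable Boolean algebra $\mathcal B$ into $\mathbb Z/N\mathbb Z$, with pointwise addition. It carries the invariant pseudometric
\[
d(f,g)=\phi(\{f\neq g\}).
\]
One then passes to the Hausdorff quotient $S/\{d=0\}$ and takes its Raikov completion, which is $G_N$.

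For item (1), completeness of $G_N$ holds by definition of a Raikov completion. For metrizability, I would first note that $d$ is an invariant pseudometric, using subadditivity and monotonicity of $\phi$ together with $\{f+h\neq g+h\}=\{f\neq g\}$. The quotient by $\{d=0\}$ is then an invariant metric group, so its completion with respect to that metric is a metrizable group; this is the Raikov completion, since for an invariant metric the two- and one-sided uniformities coincide. For separability, $\mathcal B$ is countable, so there are only countably many simple functions: each is determined by a finite partition of $1_{\mathcal B}$ into elements of $\mathcal B$ together with a labelling by $\mathbb Z/N\mathbb Z$. Hence $S$ is countable and dense in $G_N$.

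For item (2), I map $k\in\mathbb Z/N\mathbb Z$ to the constant function with value $k$, which is a group homomorphism. It is injective after passing to the quotient because $d(k,k')=\phi(1_{\mathcal B})>0$ for $k\neq k'$; this is exactly where $\phi\neq 0$ is needed, since a nonzero submeasure has $\phi(1)>0$ by monotonicity. The image is a finite subgroup, hence compact, hence closed in the Hausdorff group $G_N$.

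Item (3) is quoted directly from \cite[Theorem 5.5]{schneider2025groups}, whose hypothesis is that $\phi$ is pathological. Existence of such a $\phi$ on a countable algebra comes from \cite{herer1975existence}, with the reduction to a countable algebra taken from \cite[Corollary A.9]{schneider2025groups}. For item (4), $Nf=0$ holds for every simple $f$. Multiplication by $N$ is continuous, so the identity $Nx=0$ passes to the closure, giving $Ng=0$ on all of $G_N$. Then $p(ng)$ takes only the finitely many values $p(jg)$ for $0\le j<N$, so $p(ng)/n\to0$ for any compatible invariant value $p$, which is the class $\mathcal O_0$ condition.

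The main obstacle is bookkeeping rather than mathematics: one must make sure the cited results apply to the countable-algebra version of $\phi$. The point to check carefully is that strong exoticity in \cite{schneider2025groups} is stated for the same completion used here. If that paper works over a general algebra, I would argue via Corollary A.9 that restricting to a countable pathological subalgebra preserves the hypotheses of Theorem 5.5.
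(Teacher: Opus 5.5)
Your proposal is correct and follows the paper's route, which is just the one-line justifications embedded in the Fact. In both, (1) comes from the invariant pseudometric $\phi(\{f\neq g\})$ on the countable group of simple functions and the completion of its Hausdorff quotient, (2) from the constants forming a finite and hence closed subgroup, (3) is quoted from Theorem 5.5, and (4) follows from $Ng=0$. Your direct check that the constants embed because $\phi(1_{\mathcal B})>0$, used in place of citing Remark 2.5, and your density argument extending $Nx=0$ to all of $G_N$ are harmless elaborations of the paper's brief remarks, not a different approach.
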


\begin{theorem}\label{theorem: G_r(N) is strongly exotic}
    For every $r\in\{1,\infty\}$ and every $N\in\{0,2,3,\ldots\}$, the group $\G_r(N)$ is strongly exotic.
\end{theorem}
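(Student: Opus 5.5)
The plan is to show that every continuous homomorphism $\rho\colon\G_r(N)\to\mathrm{GL}(H)$, with the weak operator topology on the target, vanishes on a dense subgroup. Since $\rho$ is a continuous homomorphism, its kernel is closed, so it is then all of $\G_r(N)$. By condition (4) of Theorem \ref{thm: Niemiec characterization} when $N=0$, and trivially when $N\geq 2$, the finite-order elements are dense in $\G_r(N)$. It therefore suffices to show that $\rho(a)=\mathrm{id}$ for every $a\in\G_r(N)$ of finite order.

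Fix such an $a$ of order $n\geq 2$; when $N\geq 2$, $n$ divides $N$. Consider $G_n=L^0(\phi,\mathbb Z/n\mathbb Z)$ from Fact \ref{fact: properties of G_N}, and let $C\leq G_n$ be the closed copy of $\mathbb Z/n\mathbb Z$ formed by the constant functions. Let $c\in C$ be the constant function with value $1$, and define $\varphi\colon C\to\G_r(N)$ by $\varphi(kc)=ka$. This $\varphi$ is an injective homomorphism from a finite group, so it is continuous with compact range, and $\ker(\varphi)$ is trivial.

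To apply Theorem \ref{thm: extension property for G_r(N)}, we need $G_n$, equipped with a suitable invariant value $q$, to be an element of $\GG_r(N)$. By Fact \ref{fact: properties of G_N}(1), $G_n$ is separable and completely metrizable. By Birkhoff--Kakutani, since $G_n$ is abelian and metrizable, there is a compatible invariant metric, hence a value $q$. When $r=1$, we replace $q$ by $\min\{q,1\}$, which gives the same topology. By Fact \ref{fact: properties of G_N}(4), $(G_n,q)$ is of class $\mathcal O_0$; if $N\geq2$, it has exponent $n$, which divides $N$. Hence $(G_n,q)\in\GG_r(N)$ and $C$ is a closed subgroup.

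Theorem \ref{thm: extension property for G_r(N)} then gives a continuous homomorphism $\Phi\colon G_n\to\G_r(N)$ extending $\varphi$. The composite $\rho\circ\Phi\colon G_n\to\mathrm{GL}(H)$ is a weakly continuous representation. By Fact \ref{fact: properties of G_N}(3), $G_n$ is strongly exotic, so this representation is trivial. In particular $\rho(a)=\rho(\Phi(c))=\mathrm{id}$, which concludes the argument.

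The main obstacle is checking that $G_n$ really carries a value placing it in $\GG_r(N)$. The value must be invariant, bounded by $1$ when $r=1$, and separable. Invariance comes from commutativity, and boundedness from truncation, which does not change the topology. Strong exoticity is a purely topological property, so the choice of compatible value is immaterial, and so is the fact that $\Phi$ is only continuous rather than isometric.
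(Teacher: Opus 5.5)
Your proposal is correct and follows essentially the same route as the paper: you reduce to finite-order elements by density, extend $k\mapsto ka$ from the constant copy of $\mathbb Z/n\mathbb Z$ in $G_n=L^0(\phi,\mathbb Z/n\mathbb Z)$ to a continuous homomorphism $G_n\to\G_r(N)$ via Theorem \ref{thm: extension property for G_r(N)}, and then use the strong exoticity of $G_n$. Your Birkhoff--Kakutani-plus-truncation justification of the value, and your closed-kernel phrasing of the last step, only make explicit what the paper leaves implicit.
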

\begin{proof}
    Let $\pi\colon \G_r(N)\to\mathrm{GL}(H)$ be a weakly continuous linear representation of $\G_r(N)$ on a Hilbert space. We want to show that it is trivial, which we will obtain by showing that $\pi(x)=\mathrm{id}_H$ for every $x\in \G_r(N)$ with finite order. Since the elements with finite order of $\G_r(N)$ are dense in $\G_r(N)$ if $N=0$ by condition $(4)$ of Theorem \ref{thm: Niemiec characterization} (and are the whole of $\G_r(N)$ if $N\geq 2$), this will show that $\pi$ is constant on $\G_r(N)$ by continuity. 

    Let $x\in\G_r(N)$ have finite order $n>1$ (clearly $\pi(x)=\mathrm{id}_H$ for $x=0)$ and consider the group $G_n$ from Fact \ref{fact: properties of G_N}, equipped with a compatible invariant value $q_n$ with $q_n\leq1$. If $N=0$ we already have that $G_n\in\GG_r(0)$ by Fact \ref{fact: properties of G_N}, while if $N\geq 2$ we must have $n\mid N$ so that $G_n$ has exponent dividing $N$ and we still obtain $G_n\in\GG_r(N)$. Since $G_n$ contains a closed copy of $\mathbb Z/n\mathbb Z$ we can define a  map $\theta_x\colon\mathbb Z/n\mathbb Z\to\G_r(N)$ by $\theta_x(k)=kx$ on this copy of $\mathbb Z/n\mathbb Z$. It is injective, since $x$ has order exactly $n$, and defined on a finite, hence closed, subgroup so, by Theorem \ref{thm: extension property for G_r(N)}, it can be extended to a continuous injective homomorphism $j_x\colon G_n\to\G_r(N)$. Since $G_n$ is strongly exotic we have that $\pi\circ j_x\colon G_n\to\mathrm{GL}(H)$ is trivial, so that in particular $\pi(x)=\mathrm{id}_H$.

    We have shown that $\pi(x)=\mathrm{id}_H$ for every $x\in\G_r(N)$ with finite order, which, by continuity of $\pi$ and the density of the finite-order elements in $\G_r(N)$, shows that $\pi(x)=\mathrm{id}_H$ for every $x\in\G_r(N)$.
    
\end{proof}

As a corollary, we have a third proof of the extreme amenability of $\G_r(N)$.

\begin{corollary}\label{cor: extremely amenable through strongly exotic}
    Since $\G_r(N)$ is strongly exotic, it is also exotic, and since it is also amenable (being an Abelian group) it is whirly amenable and extremely amenable.
\end{corollary}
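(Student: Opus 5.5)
The plan is to chain together implications that are already available in the paper, using Theorem \ref{theorem: G_r(N) is strongly exotic} as the starting point. The only step that needs a small verification is the passage from strongly exotic to exotic. The remaining steps are citations: amenability of Abelian groups, \cite[Remark 5.7]{schneider2025groups}, and the argument given before Theorem \ref{thm: Niemiec groups are Levy} that whirly amenable groups are extremely amenable.

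First, I show that $\G_r(N)$ is exotic. Let $\pi\colon\G_r(N)\to U(H)$ be a strongly continuous unitary representation.
\begin{itemize}
\item Unitary operators are bounded and invertible, so $U(H)\subseteq\mathrm{GL}(H)$.
\item The weak operator topology is coarser than the strong operator topology. Hence $\pi$, viewed as a map into $\mathrm{GL}(H)$ with the weak operator topology, is still continuous.
\item In fact the two topologies agree on $U(H)$, although we only need the inclusion of topologies.
\end{itemize}
By Theorem \ref{theorem: G_r(N) is strongly exotic}, $\pi$ is therefore constant, so $\G_r(N)$ is exotic. I expect this to be the main point requiring care, because the two notions are phrased with different operator topologies. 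The observation about comparing the topologies resolves it immediately.

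Second, $\G_r(N)$ is an Abelian topological group, so it is amenable. By the Markov--Kakutani fixed point theorem, every continuous affine action of an Abelian group on a nonempty compact convex set has a fixed point. Applying this to the space of probability measures on a compact $G$-space gives an invariant measure.

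Third, I invoke \cite[Remark 5.7]{schneider2025groups}, which says that an amenable exotic group is whirly amenable. This gives whirly amenability of $\G_r(N)$.

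Finally, I repeat the argument recalled before Theorem \ref{thm: Niemiec groups are Levy}. Given a continuous action on a nonempty compact space, amenability yields an invariant regular Borel probability measure. Its support is nonempty, and whirly amenability forces the support to lie in the fixed point set. Hence a fixed point exists, and $\G_r(N)$ is extremely amenable.
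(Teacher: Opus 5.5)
Your proposal is correct and follows the paper's own chain of implications: strongly exotic implies exotic, Abelian implies amenable, amenable and exotic implies whirly amenable by \cite[Remark 5.7]{schneider2025groups}, and whirly amenable implies extremely amenable by the support argument recalled before Theorem~\ref{thm: Niemiec groups are Levy}. Your only additions are explicit justifications of the first two steps, namely that the strong operator topology is finer than the weak one and the Markov--Kakutani theorem; the paper simply takes these implications as standard.
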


\section{\texorpdfstring{$\G_r(0)$ is monothetic}{Gr(0) is monothetic}}
\label{section: G_r(0) is monothetic}
Recall that a topological group $G$ is called \emph{monothetic} if it contains a dense cyclic subgroup. For $N\geq 2$, $\G_r(N)$ is not monothetic, being an infinite torsion group, but we will show in this section that $\G_r(0)$ is monothetic. By the results of this section and the previous one, $\G_r(0)$ is a new example of a monothetic L\'{e}vy group; the first example of such a group was found by Glasner \cite{glasner1998minimal} and independently by Furstenberg and Weiss (unpublished). 

The argument establishing the monotheticity of $\G_r(0)$ will be a Baire category argument, showing the stronger statement that $\G_r(0)$ is \emph{generically monothetic}, meaning that $\{x\in \G_r(0)\mid\overline{\langle x\rangle}=\G_r(0)\}$ is comeager in $\G_r(0)$, but before carrying it  out we need to establish the existence of dyadic roots for the finite-order elements of $\G_r(0)$.

\begin{lemma}\label{lemma: exact halves in G_r(0)}
    Let $r\in\{1,\infty\}$ and let $G=\G_r(0)$ equipped with its value $p$. For every finite-order $a\in G$ there exists a finite-order $b\in G$ with $$2b=a\quad\text{ and }\quad p(b)=\frac12p(a).$$
\end{lemma}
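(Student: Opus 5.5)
The plan is to build an abstract finite valued group $(H,q)\in\GG_r(0)$ that contains a copy of $\langle a\rangle$ together with a square root of $a$ of half the value, and then use the extension property. Let $a$ have order $n$, so $A=\langle a\rangle\cong\mathbb Z/n\mathbb Z$ with value $p|_A$. Take $H=\mathbb Z/2n\mathbb Z$ with generator $b$, and embed $A$ into $H$ by $a\mapsto 2b$. We need a value $q$ on $H$, bounded by $1$ if $r=1$, such that $q(2kb)=p(ka)$ for all $k$ and $q(b)=\tfrac12 p(a)$.

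To define $q$, declare that $q(b)=q(-b)=\tfrac12p(a)$ and $q(2kb)=p(ka)$. Let $q$ be the largest semivalue on $H$ below these prescribed numbers: $q(x)$ is the infimum of $\sum_i c_i$ over all representations $x=\sum_i x_i$ with the $x_i$ among the prescribed elements and $c_i$ their prescribed values. Clearly $q$ is subadditive, symmetric, and satisfies $q\le1$ when $r=1$; truncating by $\min\{1,\cdot\}$ preserves this.

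\textbf{Main step (the key obstacle).} We must check that $q$ restricted to $2H$ equals $p(ka)$, i.e.\ adding $b$ does not shorten the distances inside $A$. Take a representation of $2kb$ that uses $j$ copies of $\pm b$ and some elements of $2H$. Then $j$ must be even. Each pair $\pm b\pm b$ sums to $0$ or $\pm 2b=\pm a$, at cost $p(a)$. So such a pair can be replaced by either nothing or $\pm a$, neither of which costs more. The representation therefore reduces to one inside $A$, and its cost is at least $p(ka)$ by subadditivity of $p$.

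Odd elements $(2k+1)b$ need $j$ odd, so their cost is at least $\tfrac12p(a)>0$ whenever $a\ne0$; if $a=0$, take $b=0$. Hence $q$ is positive definite. Moreover $q(b)=\tfrac12p(a)$ exactly, since any representation of $b$ has odd $j$; pairing off all but one copy gives cost at least $\tfrac12 p(a)$. $H$ is finite, hence torsion and of class $\mathcal O_0$, so $(H,q)\in\GG_r(0)$.

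\textbf{Conclusion.} The map $\varphi\colon 2H\to G$, $2kb\mapsto ka$, is an isometric embedding of a finite, hence closed, subgroup with compact range. By Theorem \ref{thm: extension property for G_r(N)} (or property (3) of Theorem \ref{thm: Niemiec characterization} combined with (2)), it extends to an isometric embedding $\Phi\colon H\to G$. Then $\Phi(b)$ has finite order, satisfies $2\Phi(b)=a$, and $p(\Phi(b))=\tfrac12 p(a)$.
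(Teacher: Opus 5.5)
Your proposal is correct and is essentially the paper's proof: adjoin a formal half of $a$ to get a cyclic group of order $2\,\mathrm{ord}(a)$, and give it the largest value extending $p|_{\langle a\rangle}$ in which the new generator has value $p(a)/2$. Your infimum over representations agrees with the paper's formula $\inf_{j}\{p(k-ja)+\tfrac{p(a)}{2}|n+2j|\}$; then truncate at $1$ when $r=1$ and invoke Niemiec's extension theorem, with your parity-and-pairing argument supplying the isometry check that the paper dismisses as easy. The one slip is your wording suggesting $q\le 1$ already holds before truncation when $r=1$, which can fail on odd multiples of $b$; since truncation leaves $q$ unchanged on $2H$ and at $b$, nothing is lost.
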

\begin{proof}
    If $a=0$ simply take $b=0$, so suppose that $a$ is a nonzero, finite-order element of $G$. The goal is to embed $\langle a\rangle$ into a group of class $\GG_r(0)$ in which there exists a finite-order $b$ as in the statement of the lemma, and then use the extension property of $G$ (Theorem \ref{thm: extension property for G_r(N)}) to conclude. Let $K=\langle a\rangle$. If $\mathrm{ord}(a)=m$, we construct a cyclic group of order $2m$ by adjoining a formal half of $a$ to $K$. Let $H=\langle K,t\mid 2t=a\rangle$ and consider the value $q_0$ defined by $q_0(t)=p(a)/2$ and by the smallest possible value on the remaining elements. Concretely every element of $H$ can be written as $k+nt$ for some $k\in K$ and $n\in\mathbb Z$, but this representation is not unique, since, using the relation $2t=a$, we have $$k+nt=(k-ja)+(n+2j)t,$$ for any $j\in\mathbb Z$ and thus we define $$q_0(k+nt)=\inf_{j\in\mathbb Z}\left\{p(k-ja)+\frac{p(a)}2|(n+2j)|\right\}.$$
    If $r=\infty$ let $q=q_0$, otherwise, if $r=1$, let $q=\min\{1,q_0\}$. It is easy to check that $q$ is a value on $H$ and that the natural embedding $(K,p)\to (H,q)$ is isometric. Moreover, since $H$ is finite ($|H|=2|K|$), we have that $H\in\GG_r(0)$. By Theorem \ref{thm: extension property for G_r(N)} there is an isometric embedding $\psi\colon(H,q)\to (G,p)$ extending the inclusion of $(K,p)$ in $(G,p)$. The finite-order element $b=\psi(t)$ of $G$ is as in the statement of the lemma.
\end{proof}

\begin{corollary}\label{cor: dyadic roots}
    By iterating Lemma \ref{lemma: exact halves in G_r(0)} we obtain that, for every finite-order $a\in\G_r(0)$, there exists a $2^n$-th root of $a$, that is a finite-order $b\in\G_r(0)$ with $$2^nb=a\quad\text{ and }\quad p(b)=\frac1{2^n}p(a).$$
\end{corollary}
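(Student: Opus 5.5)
The plan is a straightforward induction on $n$ using Lemma \ref{lemma: exact halves in G_r(0)}. For $n=0$ we simply take $b=a$. Suppose that for some $n\geq 0$ we have found a finite-order $b_n\in\G_r(0)$ with $2^nb_n=a$ and $p(b_n)=2^{-n}p(a)$. Since $b_n$ has finite order, Lemma \ref{lemma: exact halves in G_r(0)} applies to it and produces a finite-order $b_{n+1}$ with $2b_{n+1}=b_n$ and $p(b_{n+1})=\frac12 p(b_n)$.

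We then check the two required identities for $b_{n+1}$. First,
\[
2^{n+1}b_{n+1}=2^n(2b_{n+1})=2^nb_n=a.
\]
Second,
\[
p(b_{n+1})=\tfrac12 p(b_n)=2^{-(n+1)}p(a).
\]
Finiteness of the order of $b_{n+1}$ is guaranteed directly by the lemma, so the induction hypothesis is preserved at every step.

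The only point needing care is that the lemma must be applicable at each stage. This is exactly why the lemma is stated with a finite-order output rather than merely some element of $\G_r(0)$: the output can be fed back in as the next input. There is no genuine obstacle; the substantive work, namely the isometric adjunction of an exact half via Theorem \ref{thm: extension property for G_r(N)}, has already been carried out in Lemma \ref{lemma: exact halves in G_r(0)}.
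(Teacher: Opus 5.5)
Your induction is correct and is exactly the iteration of Lemma \ref{lemma: exact halves in G_r(0)} that the paper relies on. As you note, the lemma's finite-order conclusion is what lets each output be fed back in as the next input.
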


We can now verify that the generic element of $\G_r(0)$ generates a dense cyclic subgroup. This  gives a positive answer to Question 1 of \cite{Niemiec}, which asks whether the groups $\G_1(0)$ and $\G_\infty(0)$ are monothetic. For a topological group $G$, let $\Gen(G)=\{x\in G\mid\overline{\langle x\rangle}=G\}$ denote the set of \emph{topological generators} of $G$.

\begin{theorem}\label{thm: G_r(0) is generically monothetic}
    For $r\in\{1,\infty\}$, $\Gen(\G_r(0))$ is comeager in $\G_r(0)$, that is $\G_r(0)$ is generically monothetic.
\end{theorem}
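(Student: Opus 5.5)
Let $G=\G_r(0)$ with value $p$, and fix a countable dense set $\{a_n\}_{n\in\mathbb N}$ of finite-order elements, which exists by condition (4) of Theorem \ref{thm: Niemiec characterization}. We have
\[
\Gen(G)=\bigcap_{n,k\in\mathbb N} U_{n,k},\qquad U_{n,k}=\{x\in G\mid \exists m\in\mathbb Z\ p(mx-a_n)<1/k\}.
\]
Each $U_{n,k}$ is open, since $x\mapsto mx$ is continuous. So by the Baire category theorem it suffices to show that each $U_{n,k}$ is dense.

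To prove density, fix $y\in G$ and $\varepsilon>0$. Since the finite-order elements are dense, we may first replace $y$ by a finite-order element $c$ with $p(c-y)<\varepsilon/2$. We then want $x$ with $p(x-c)<\varepsilon/2$ such that some multiple of $x$ is within $1/k$ of $a_n$. The plan is to take $x=c+b$ with $b$ a small finite-order element satisfying $mb=a_n-mc$ for a suitable $m$; then $mx=a_n$ exactly.

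We choose $m$ as follows. Let $s$ be the order of $c$, and take $m=s\cdot 2^j$. Then $mc=0$, and we need $b$ with $2^j(sb)=a_n$ and $p(b)$ small. By Corollary \ref{cor: dyadic roots} there is a finite-order $b'$ with $2^jb'=a_n$ and $p(b')=2^{-j}p(a_n)$. It remains to divide $b'$ by $s$, which the dyadic corollary does not directly allow.

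This division step is the main obstacle. To overcome it, I would generalize Lemma \ref{lemma: exact halves in G_r(0)} to arbitrary integers $s$: for every finite-order $a$ there is a finite-order $b$ with $sb=a$ and $p(b)=p(a)/s$. The proof runs the same way. Adjoin $t$ with $st=a$ to $K=\langle a\rangle$, and set
\[
q_0(k+nt)=\inf_{j\in\mathbb Z}\Bigl\{p(k-ja)+\tfrac{p(a)}{s}|n+sj|\Bigr\},
\]
truncating at $1$ if $r=1$. We must check that $q_0$ is a value, that $(K,p)\to(H,q_0)$ is isometric, and that $q_0(t)=p(a)/s$; these follow from the triangle inequality $p(ja)\le |j|p(a)$. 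Theorem \ref{thm: extension property for G_r(N)} then embeds $(H,q)$ over $K$ into $G$.

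With this in hand, we take $b$ with $sb=b'$ and $p(b)=p(b')/s\le 2^{-j}p(a_n)$. Choosing $j$ large gives $p(b)<\varepsilon/2$. Then $x=c+b$ lies within $\varepsilon$ of $y$ and satisfies $mx=a_n$, so $x\in U_{n,k}$.

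In the case $r=1$ we must also ensure that the truncation does not break exactness of $p(b)$. This holds because $p(a)/s\le 1$, so truncation only affects large values. Alternatively, the argument needs only $p(b)\le p(a)/s$.
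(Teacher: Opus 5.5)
Your proof is correct. Its skeleton matches the paper's: the open sets $U_{n,k}$, the Baire category theorem, and perturbing a finite-order $c$ near $y$ by a small exact root. The difference is in how the perturbation is produced.

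You annihilate $c$ by multiplying by its order $s$. That is what forces the division by $s$, and hence your generalization of Lemma~\ref{lemma: exact halves in G_r(0)} to arbitrary $s$-th roots. The paper avoids this step entirely. In your notation, it sets $d=a_n-2^jc$. Since $d$ lies in the finite group $\langle a_n,c\rangle$, we have $p(d)\le M:=\max\{p(z)\mid z\in\langle a_n,c\rangle\}$, a bound independent of $j$. It then takes a $2^j$-th root $u$ of $d$ from Corollary~\ref{cor: dyadic roots} and puts $x=c+u$. This gives $2^jx=a_n$ exactly and $p(x-c)\le 2^{-j}M$, so only dyadic roots are ever needed.

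Your generalized root lemma is still valid, by the same infimal-convolution construction. Two of its checks rest on more than the triangle inequality $p(ia)\le|i|p(a)$ you cite:
\begin{itemize}
\item definiteness of $q_0$ uses that $K$ is finite, so the infimum is attained;
\item $q_0(t)\ge p(a)/s$ uses $|1+si|\ge 1$ for all $i\in\mathbb Z$ when $s\ge 2$.
\end{itemize}
The lemma gives a stronger structural fact: exact $s$-th roots of value $p(a)/s$ for every $s$. With it your dyadic step is actually redundant, since you could take an $(sL)$-th root of $a_n$ directly for large $L$. The paper's trick instead buys economy: it needs nothing beyond the halving lemma.
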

\begin{proof}
    Let $G=\G_r(0)$ equipped with its value $p$. Fix a dense sequence $(a_i)_{i\in\mathbb N}$ of finite-order elements of $G$, which exists by condition $(4)$ in Theorem \ref{thm: Niemiec characterization}. For $j,k\in\mathbb N_{\geq 1}$ let $$\mathcal U_{j,k}=\bigcup_{m\in\mathbb Z}\left\{x\in G\mid p(mx-a_j)< 1/k\right\}.$$ 
    It is clear that each $\mathcal U_{j,k}$ is open, since $p$ and the map $x\mapsto mx$ for a fixed $m$ are continuous, moreover, since the sequence $(a_i)_i$ is dense in $G$, we also have $\Gen(G)=\bigcap_{j,k}\mathcal U_{j,k}$. It only remains to show that each $\mathcal U_{j,k}$ is dense. Fix $j,k\geq 1$ and a nonempty open set $V\subseteq G$, we want to show that $\mathcal U_{j,k}\cap V\neq\varnothing$. Fix a finite-order $c\in V$ and $\varepsilon>0$ such that $B_p(c,\varepsilon)\subseteq V$. The subgroup $A=\langle a_j,c\rangle$ is finite, so that $M=\max\{p(z)\mid z\in A\}$ is also finite. Choose $s\geq 1$ big enough to have $2^{-s}M<\varepsilon$ and set $d=a_j-2^sc$, which is a finite-order element of $G$ (since it belongs to $A$). By Corollary \ref{cor: dyadic roots} there exists a finite-order $u\in G$ with $$2^su=d\quad\text{ and }\quad p(u)=\frac1{2^s}p(d)\leq\frac M{2^s}<\varepsilon.$$
    Set $x=c+u$. We have that $p(c-x)=p(u)<\varepsilon$, so that $x\in V$, but also that $2^sx=2^sc+2^su=2^sc+d=a_j$, that is $x\in\mathcal U_{j,k}$, showing that $\mathcal U_{j,k}$ is dense.

    Since $\Gen(G)=\bigcap_{j,k}\mathcal U_{j,k}$ is a countable intersection of dense open sets, it is comeager in $G$ by the Baire category theorem, concluding the argument.
\end{proof}

Cameron and Vershik \cite{cameron2006isometry} showed that there exists a monothetic group structure on the Urysohn space $\mathbb U$ for which the metric of $\mathbb U$ is invariant, while a combination of \cite[Theorem 0.1]{doucha2019} and \cite[Theorem 6.4]{melleray2013generic} applied to $\mathbb Z$ shows that there exists a monothetic and extremely amenable group structure on $\mathbb U$ for which the metric of $\mathbb U$ is invariant. We extend those results by showing that the group structure can also be required to be strongly exotic and L\'{e}vy and by showing that they also hold for the Urysohn sphere $\mathbb U_1$.

\begin{corollary} \label{corollary: group structure on Urysohn sphere}
    Let $\mathbb U_\infty=\mathbb U$ be the Urysohn space and let $\mathbb U_1$ be the Urysohn sphere. For $r\in\{1,\infty\}$, there exists a monothetic, L\'{e}vy, strongly exotic group structure on $\mathbb U_r$ for which the metric of $\mathbb U_r$ is invariant.
\end{corollary}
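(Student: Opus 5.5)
The plan is to take $\G_r(0)$ itself as the group structure and transport it to $\mathbb U_r$ through an isometry. By \cite[Theorem 5.1]{Niemiec} (recalled in the introduction), the metric space underlying $\G_r(0)$, with the metric $d_p(x,y)=p(x-y)$ induced by its value $p$, is isometric to the Urysohn space of diameter $r$. For $r=\infty$ this is $\mathbb U$, and for $r=1$ it is the Urysohn sphere $\mathbb U_1$. Fix such an isometry $\Phi\colon \G_r(0)\to\mathbb U_r$.

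Define the group operation on $\mathbb U_r$ by
\[
u\oplus v=\Phi\bigl(\Phi^{-1}(u)+\Phi^{-1}(v)\bigr),
\]
with inverse $\ominus u=\Phi(-\Phi^{-1}(u))$. Then $\Phi$ is by construction a group isomorphism from $\G_r(0)$ onto $(\mathbb U_r,\oplus)$ that is also an isometry.

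It remains to check the required properties one at a time.
\begin{enumerate}
\item Invariance of the metric. Write $x=\Phi^{-1}(u)$, $y=\Phi^{-1}(v)$, $z=\Phi^{-1}(w)$. Then
\[
d_{\mathbb U_r}(u\oplus w,v\oplus w)=p\bigl((x+z)-(y+z)\bigr)=p(x-y)=d_{\mathbb U_r}(u,v),
\]
and this also gives left invariance, since the group is Abelian.
\item Topological group. Since $\Phi$ is a homeomorphism, the operations of $(\mathbb U_r,\oplus)$ are continuous, and the topology is the Urysohn metric topology. So $\Phi$ is an isomorphism of topological groups, in fact an isometric one.
\item Transfer of properties. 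Being monothetic, L\'evy and strongly exotic are all invariant under isomorphism of topological groups. The L\'evy property needs some care, since its definition refers to a compatible left-invariant metric. By the remark after the definition it is independent of that choice, and here the metric is in any case carried over isometrically.
\end{enumerate}
So the three properties follow from Theorem~\ref{thm: G_r(0) is generically monothetic} (generically monothetic implies monothetic, since a comeager set in a Polish space is nonempty), Theorem~\ref{thm: Niemiec groups are Levy}, and Theorem~\ref{theorem: G_r(N) is strongly exotic}, each applied with $N=0$.

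There is no real obstacle here. The only point to verify is that Niemiec's isometry theorem applies with $N=0$ for both $r=1$ and $r=\infty$. This is exactly the case $N\in\{0,2\}$ quoted in the introduction, via \cite[Theorem 5.1]{Niemiec}.
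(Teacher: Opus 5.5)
Your proposal is correct and follows the same route as the paper: transport the group structure of $\G_r(0)$ to $\mathbb U_r$ through the isometry given by \cite[Theorem 5.1]{Niemiec}, then invoke the monotheticity, L\'evy and strong exoticity theorems with $N=0$. The paper's proof is a one-line citation of these results; you additionally write out the transport-of-structure checks (invariance of the metric, continuity of the operations, and invariance of the three properties under topological group isomorphism) that the paper leaves implicit.
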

\begin{proof}
    This follows immediately from Theorem \ref{thm: G_r(0) is generically monothetic}, Theorem \ref{thm: Niemiec groups are Levy}, Theorem \ref{theorem: G_r(N) is strongly exotic} and the fact that $\G_r(0)$ with its invariant metric is isometric to $\mathbb U_r$ \cite[Theorem 5.1]{Niemiec}.
\end{proof}
%\printbibliography

\subsection*{Acknowledgements}
Large language models (ChatGPT version Sol 5.6 and Claude version Fable 5) were used in producing the proofs presented in this paper, including the generation of substantial portions of the initial proofs. The authors then rewrote the paper independently and take full responsibility for the final formulation, exposition, and correctness of the mathematical content.

\bibliographystyle{amsplain}
\bibliography{main}

\end{document}